\documentclass[reqno,centertags,12pt]{amsart}
\usepackage[letterpaper,margin=1.2in]{geometry}
\usepackage{amsmath,amsthm,amsfonts,amssymb,enumerate}
\usepackage[bookmarksopen=false,final]{hyperref}

\newcommand{\no}{\notag}
\newcommand{\lb}{\label}
\newcommand{\bb}{\mathbb}
\newcommand{\cc}{\mathcal}

\newcommand{\ol}{\overline}
\newcommand{\bs}{\backslash}
\newcommand{\bbR}{\mathbb{R}}
\newcommand{\bbC}{\mathbb{C}}
\newcommand{\bbD}{\mathbb{D}}

\newcommand{\De}{\Delta}

\newcommand{\Om}{\Omega}

\newcommand{\eps}{\varepsilon}

\newcommand{\cvh}{\mathrm{cvh}}
\newcommand{\dist}{\mathrm{dist}}
\newcommand{\diam}{\mathrm{diam}}
\newcommand{\E}{\ensuremath{\mathsf{E}}}
\newcommand{\ca}{\mathrm{Cap}}
\newcommand{\W}{\mathcal{W}}
\newcommand{\PW}{\mathcal{PW}}
\newcommand{\IR}{\mathcal{IR}}
\newcommand{\ir}{\mathrm{ir}}
\newcommand{\reg}{\mathrm{reg}}

\newtheorem{theorem}{Theorem}
\newtheorem{lemma}[theorem]{Lemma}

\newtheorem{corollary}[theorem]{Corollary}
\theoremstyle{definition}

\newtheorem{example}[theorem]{Example}
\theoremstyle{remark}
\newtheorem*{remark}{Remark}
\numberwithin{equation}{section}
\numberwithin{theorem}{section}

\begin{document}
\title[Widom factors on semi-regular subsets of $\mathbb{R}$] {Widom factors for Chebyshev and residual polynomials on semi-regular subsets of $\mathbb{R}$}
	
\author{Robert Dukes}
\address{Department of Mathematics and Statistics, University of New Mexico, Albuquerque, NM 87131, USA}
\email{dukesr@unm.edu}

\author{Maxim Zinchenko}
\address{Department of Mathematics and Statistics, University of New Mexico, Albuquerque, NM 87131, USA}
\email{maxim@math.unm.edu}
\thanks{\footnotesize M.Z. is supported in part by the Simons Foundation Grant MP--TSM--00002651.}

\subjclass[2020]{Primary 41A17; Secondary 41A50, 41A44, 30C10}
\keywords{Chebyshev polynomials, residual polynomials, Widom factors, Szeg\H{o}--Widom asymptotics}

\begin{abstract}
We study $L^\infty$ Widom factors for Chebyshev and residual polynomials on compact non-polar subsets of the real line that need not be regular in the sense of potential theory and, in particular, on sets with isolated points.  We first show that boundedness of the Widom factors is independent of the normalization point $x_*\in\overline{\mathbb{R}}\backslash\E$.  For semi-regular sets, meaning that the set of regular points $\E^\reg$ is closed, we introduce the irregularity coefficient
$\IR(\E,x_*)=\sum_{x\in\E\bs\E^\reg}G_\E(x,x_*)$, which measures the contribution of irregular boundary points to the extremal polynomial problem. Our upper bound is
\[
    \sup_{n\ge1}\W_n(\E,x_*)
    \le
    2\exp\bigl[\PW(\E^\reg,x_*)+\IR(\E,x_*)\bigr],
\]
while the complementary lower bound is
\[
    \liminf_{n\to\infty}\W_n(\E,x_*)
    \ge
    2\exp\bigl[\IR(\E,x_*)\bigr].
\]
Consequently, for semi-regular Parreau--Widom sets the Widom factors are bounded if and only if $\IR(\E,x_*)<\infty$.  When the regular part is a finite union of intervals, this condition is equivalent to a geometric square-root summability condition on the irregular points.  In the special case $\E=[a,b]\cup\{x_k\}_{k\ge1}$, the Widom factors have the exact, possibly infinite, limit $2\exp[\IR(\E,x_*)]$. When this limit is finite, we obtain the corresponding Szeg\H{o}--Widom asymptotics for the normalized extremal polynomials.
\end{abstract}

\date{\today}
\maketitle

\section{Introduction}

In this work we consider compact sets $\E\subset\bbR$ of positive logarithmic capacity and denote the supremum norm on $\E$ by $\|\cdot\|_\E$.  The $n$-th Chebyshev polynomial $T_n$ on $\E$ is the unique monic polynomial of degree $n$ that minimizes $\|\cdot\|_\E$. The associated Widom factor is
\begin{align}\label{Wn-cheb}
    \W_n(\E)=\frac{\|T_n\|_\E}{\ca(\E)^n}.
\end{align}
By Szeg\H{o}'s root asymptotic theorem, $\|T_n\|_\E^{1/n}\to\ca(\E)$, so the normalization by $\ca(\E)^n$ removes the leading exponential behavior of the Chebyshev norm and leaves a quantity that captures the subexponential behavior. In this paper we investigate how the asymptotic behavior of the Widom factors is governed by the potential-theoretic geometry of $\E$, with particular emphasis on the contribution of irregular points.

We also consider the corresponding residual extremal problem. Given $x_*\in\bbR\bs\E$, let $T_{n,x_*}$ be the unique polynomial that minimizes $\|P\|_\E$ among all polynomials $P$ of degree at most $n$ satisfying $P(x_*)=1$. The residual polynomials satisfy the root asymptotics $\|T_{n,x_*}\|_\E^{1/n}\to\ca(\E,x_*)$, see e.g., \cite[Thm.2.4(a)]{CSZ5}. Here $\ca(\E,x_*)$ is defined by
\begin{align}\label{Cap-res}
    \ca(\E,x_*)
    =
    \exp[-G_\E(x_*)],
\end{align}
and $G_\E(z)=G_\E(z,\infty)$ denotes the Green function with pole at infinity.

Following the convention of \cite{CSZ5}, we define the residual Widom factor by
\begin{align}\label{Wn-res}
    \W_n(\E,x_*)
    =
    \|T_{n,x_*}\|_\E \,
    [\ca(\E,x_*)^n+\ca(\E,x_*)^{-n}].
\end{align}
The Widom factors defined in \eqref{Wn-cheb} and \eqref{Wn-res} are compatible in the sense that
\begin{align*}
    \lim_{x_*\to\infty}\W_n(\E,x_*)=\W_n(\E).
\end{align*}
This continuity result follows from \cite[Prop.~2.2]{CSZ6}, after translating between the normalizations used there and the convention in \eqref{Wn-res}. We therefore set
\[
    \W_n(\E,\infty)=\W_n(\E),\quad
    \ca(\E,\infty)=\ca(\E),\quad
    T_{n,\infty}=T_n.
\]
This convention allows the Chebyshev and residual problems to be treated simultaneously by considering reference points $x_*\in\ol{\bbR}\bs\E$, where $\ol{\bbR}=\bbR\cup\{\infty\}$.

We are interested in upper and lower bounds on the Widom factors for sets that are not regular in the sense of potential theory. Without additional assumptions on $\E$, the Widom factors may be unbounded even if the set is regular. In fact, on sufficiently thin Cantor-type compact subsets of $\bbR$, they can exhibit arbitrary subexponential growth \cite{GonHat15}. On the other hand, for regular Parreau--Widom sets, which include many thick Cantor-type sets such as homogeneous sets in the sense of Carleson, the Widom factors are uniformly bounded. Uniform upper bounds on Widom factors are called Totik--Widom bounds.

For regular Parreau--Widom compact sets $\E\subset\bbR$ and $x_*\in\ol{\bbR}\bs\E$, the sharp Totik--Widom bound
\begin{align}\label{UB-reg}
    \sup_{n\ge1}\W_n(\E,x_*)\le 2\exp\bigl[\PW(\E,x_*)\bigr]
\end{align}
was proved in \cite{CSZ1} for Chebyshev polynomials and in \cite{CSZ5} for residual polynomials. The Parreau--Widom constant is defined by
\begin{align*}
    \PW(\E,x_*)
    =
    \sum_{c\in\cc C(\E,x_*)}G_\E(c,x_*),
\end{align*}
where $\cc C(\E,x_*)$ denotes the set of critical points of the Green function $G_\E(\cdot,x_*)$ in $\ol{\bbC}\bs(\E\cup\{x_*\})$.
We note that, for regular compact sets, the numerical value of $\PW(\E,x_*)$ depends on $x_*\in\ol{\bbR}\bs\E$, but the finiteness of the sum does not, see Corollary~\ref{IR-PW-inv}. 
Regular compact sets satisfying $\PW(\E,x_*)<\infty$ for one, and hence for all, normalization points $x_*\in\ol{\bbR}\bs\E$ are called \emph{Parreau--Widom sets}.

It was proved in \cite{CSYZ2} that, for generic regular compact sets $\E\subset\bbR$, the Chebyshev Widom factors $\W_n(\E,\infty)$ are bounded if and only if $\E$ is a Parreau--Widom set. We extend this equivalence to residual polynomials, under the same genericity hypothesis in Corollary~\ref{Wn-PW-eqv-cor}, by showing that boundedness of the Widom factors $\W_n(\E,x_*)$ is independent of the choice of $x_*\in\ol{\bbR}\bs\E$, see Theorem~\ref{Wn-ind-thm}. Whether the equivalence holds for all regular compact subsets of $\bbR$ remains open.

In Section~\ref{Sec3} we extend the Totik--Widom bound \eqref{UB-reg} to compact non-polar sets that are not necessarily regular. We write $\E^\reg$ and $\E^\ir$ for the regular and irregular points of $\E$, respectively. We call a compact non-polar set $\E$ \emph{semi-regular} if $\E^\reg$ is closed. This condition is weaker than regularity and, in particular, it allows $\E$ to have isolated points. By Kellogg's theorem \cite[Thm.~4.2.5]{Ran95}, the set of irregular points $\E^\ir$ is polar.  Hence replacing a semi-regular set $\E$ by $\E^\reg$ does not change the Green function in the complement, but it can change the extremal polynomial problem because the supremum norm $\|\cdot\|_\E$ is taken over all of $\E$, including the irregular points.

The contribution of the irregular points to the size of the Widom factors is measured by the irregularity coefficient
\begin{align*}
    \IR(\E,x_*)=\sum_{x\in\E^\ir}G_\E(x,x_*),
\end{align*}
where boundary values of the Green function at points of $\E$ are understood as upper boundary limits, as recalled in Section~\ref{Sec2.1}.  Although the numerical value of $\IR(\E,x_*)$ depends on $x_*$, its finiteness is independent of the choice of $x_*\in\ol{\bbR}\bs\E$, see Corollary~\ref{IR-PW-inv}.

One of the main results of this paper is a Totik--Widom upper bound for semi-regular sets. If $\E$ is semi-regular, then $\E^\reg$ is compact and regular, and the Parreau--Widom constant $\PW(\E^\reg,x_*)$ is well defined. We prove
\begin{align}\label{UB-intro}
    \sup_{n\ge1}\W_n(\E,x_*)\le
    2\exp\bigl[\PW(\E^\reg,x_*)+\IR(\E,x_*)\bigr].
\end{align}
This shows that irregular points enter the upper bound through a separate additive term in the exponent. The regular part contributes the usual Parreau--Widom constant, while the polar irregular part contributes the irregularity coefficient. When $\E$ is regular, $\IR(\E,x_*)=0$ and $\E^\reg=\E$, so \eqref{UB-intro} reduces to \eqref{UB-reg}.

Unlike the upper bound, the basic lower bound of Schiefermayr \cite{Sch08, Sch11, Sch17}, see also \cite{CSZ1, CSZ5}, holds for arbitrary compact sets $\E\subset\bbR$ of positive capacity:
\begin{align}\label{LB}
    \inf_{n\ge1}\W_n(\E,x_*)\ge 2.
\end{align}
In Section~\ref{Sec4} we derive the following asymptotic improvement for compact semi-regular sets:
\begin{align}\label{ALB-intro}
    \liminf_{n\to\infty}\W_n(\E,x_*)
    \ge
    2\exp\bigl[\IR(\E,x_*)\bigr].
\end{align}
This shows that the contribution of irregular points captured by the $\exp[\IR(\E,x_*)]$ factor in \eqref{UB-intro} is asymptotically sharp.

Combining \eqref{UB-intro} and \eqref{ALB-intro} gives the principal boundedness criterion of the paper: if $\E$ is a semi-regular Parreau--Widom set, then
\begin{align*}
    \sup_{n\ge1}\W_n(\E,x_*)<\infty
    \quad\Longleftrightarrow\quad
    \IR(\E,x_*)<\infty.
\end{align*}
Thus, within the semi-regular Parreau--Widom class, boundedness of Widom factors is governed exactly by the irregular part of the set.

For compact sets $\E\subset\bbR$ whose regular part $\E^\reg$ is a finite union of intervals, the condition $\IR(\E,x_*)<\infty$ admits a geometric reformulation:
\begin{align}\label{IR-geom}
    \sum_{x\in\E^\ir}\dist(x,\E^\reg)^{1/2}<\infty.
\end{align}
In this case $\E^\reg$ is automatically a regular Parreau--Widom set, and hence the Widom factors $\W_n(\E,x_*)$ are bounded if and only if \eqref{IR-geom} holds.

Finally, in Section~\ref{Sec5}, we consider sets of the form $\E=[a,b]\cup\{x_k\}_{k\ge1}\subset\bbR$. In this case the upper and lower bounds \eqref{UB-intro} and \eqref{ALB-intro} match, yielding a possibly infinite limit for the Widom factors,
\begin{align*}
    \lim_{n\to\infty}\W_n(\E,x_*)
    =
    2\exp\big[\IR(\E,x_*)\big].
\end{align*}
When this limit is finite, equivalently when $\sum_k\dist(x_k,[a,b])^{1/2}<\infty$, we also prove Szeg\H{o}--Widom asymptotics for the normalized extremal polynomials. In this asymptotics the isolated irregular points enter through an explicit Blaschke product in the conformal image of $\ol{\bbC}\bs[a,b]$.

We also note that similar phenomena occur in the $L^2$ theory of orthogonal polynomials on the real line. For measures with an interval essential support and a denumerable set of mass points outside the interval, Simon and Zlato\v{s} \cite{SimZla03} related boundedness of the corresponding $L^2$-Widom factors to a Blaschke condition on the mass points and a Szeg\H{o} condition on the absolutely continuous part of the measure; see also \cite{Sim11} for a detailed account. The finite-gap and Parreau--Widom analogues were studied in \cite{CSZ11} and \cite{Chr12}, respectively. In the interval case with mass points, Szeg\H{o}--Widom asymptotics for orthogonal polynomials was proved by Peherstorfer and Yuditskii \cite{PehYud01} under the Blaschke and Szeg\H{o} conditions. Further polynomial asymptotics in the presence of mass points were studied in \cite{PehYud03,PehYud06,Chr19}.


\section{Preliminaries}\label{Sec2}

In this section we collect the potential-theoretic and extremal-polynomial facts used throughout the paper. 

\subsection{Green functions and semi-regular sets}\label{Sec2.1}

Let $\E\subset\bbR$ be a compact non-polar set and consider the domain $\Omega_\E=\ol{\bbC}\bs\E$. For $x_*\in\Omega_\E$ we denote by $G_\E(z,x_*)$ the Green function of $\Omega_\E$ with pole at $x_*$. We write $G_\E(z)=G_\E(z,\infty)$ for the Green function with pole at infinity. The logarithmic capacity $\ca(\E)$ is determined by the asymptotics of $G_\E(z)$ at infinity,
\begin{align}\label{Cap-cheb}
    G_\E(z)=\log|z|-\log\ca(\E)+o(1),
    \quad z\to\infty.
\end{align}
For $x\in\E$, the boundary value $G_\E(x,x_*)$ will always be understood as the upper boundary value
\[
    G_\E(x,x_*)=\limsup_{\Omega_\E\ni z\to x}G_\E(z,x_*).
\]
These boundary values are finite at every point $x\in\E$. For $x_*=\infty$, this follows from the logarithmic-potential representation of $G_\E(z)$ since the logarithmic potential of a finite compactly supported measure is bounded from below on bounded sets. The case of finite $x_*$ follows by applying a M\"obius transformation sending $x_*$ to $\infty$ and using conformal invariance of Green functions.  Thus, the extended Green function $G_\E(\cdot,x_*)$ is finite, nonnegative, and upper semicontinuous on $\ol{\bbC}\bs\{x_*\}$.
Throughout the paper, Green functions are understood to be extended in this way.

By the Green-function characterization of regular boundary points \cite[Thm.~4.4.9]{Ran95}, a point $x\in\E$ is regular for the Dirichlet problem in $\Omega_\E$ if $G_\E(x,x_*)=0$ and $x\in\E$ is irregular if $G_\E(x,x_*)>0$. This characterization is independent of the choice of $x_*$. We denote the sets of regular and irregular points by $\E^\reg$ and $\E^\ir$, respectively. We call a compact non-polar set $\E\subset\bbR$ \emph{semi-regular} if $\E^\reg$ is a closed set.

For a compact semi-regular set $\E\subset\bbR$, the set $\E^\reg$ is compact. Since $\E^\ir$ is polar, $\ca(\E)=\ca(\E^\reg)$ and $G_\E(\cdot,x_*)=G_{\E^\reg}(\cdot,x_*)$ on $\Omega_\E$ and hence on $\ol\bbC\bs\{x_*\}$. It follows that $\E^\reg$ is itself a regular compact set and $G_\E(\cdot,x_*)$ is continuous on $\ol\bbC\bs\{x_*\}$.

\subsection{The Parreau--Widom constant and the irregularity coefficient}\label{Sec2.2}

By a gap of $\E$ we mean a connected component of $\ol{\bbR}\bs\E$. We call the component containing infinity the exterior gap of $\E$.  For a regular compact set $\E\subset\bbR$, let $\cc C(\E,x_*)$ denote the set of critical points of $G_\E(\cdot,x_*)$ in $\ol{\bbC}\bs(\E\cup\{x_*\})$. For real compact sets and real poles, these critical points lie in the gaps of $\E$, there is no critical point in the gap containing $x_*$, and every other gap contains exactly one critical point. We refer to \cite{CSZ1,CSZ5} for this standard description.

For regular compact sets $\E\subset\bbR$, the Parreau--Widom constant is defined by
\begin{align}\label{PW-reg}
    \PW(\E,x_*)
    =
    \sum_{c\in\cc C(\E,x_*)}G_\E(c,x_*),
\end{align}
where we use the convention that $\PW(\E,x_*)=0$ if $\cc C(\E,x_*)$ is empty.
For compact semi-regular sets $\E\subset\bbR$ we define $\PW(\E,x_*)$ as the Parreau--Widom constant of the subset of regular points $\E^\reg$,
\begin{align}\label{PW-def}
    \PW(\E,x_*)=\PW(\E^\reg,x_*),
\end{align}
which is natural since $G_\E(\cdot,x_*)$ and $G_{\E^\reg}(\cdot,x_*)$ coincide on $\ol{\bbC}\bs\{x_*\}$. We call a compact semi-regular set $\E$ a \emph{Parreau--Widom set} if
$\PW(\E,x_*)$ is finite for one, and hence by Corollary~\ref{IR-PW-inv}, for every
$x_*\in\ol{\bbR}\bs\E$.

The \emph{irregularity coefficient} is defined by
\begin{align}\label{IR-def}
    \IR(\E,x_*)=
    \sum_{x\in\E^\ir}G_\E(x,x_*),
\end{align}
where we use the convention that $\IR(\E,x_*)=0$ if $\E^\ir$ is empty and $\IR(\E,x_*)=\infty$ if $\E^\ir$ is uncountable.
\begin{remark}
The definition of the irregularity coefficient is equivalent to
\[
    \IR(\E,x_*)=\sup_F\sum_{x\in F}G_\E(x,x_*),
\]
where the supremum is over all finite sets $F\subset\E^\ir$. Indeed, if $\E^\ir$ is countable this is the usual equality between a positive series and the supremum of its finite partial sums. If $\E^\ir$ is uncountable, then, since $G_\E(x,x_*)>0$ for every $x\in\E^\ir$, one of the sets $\{x\in\E^\ir:\,G_\E(x,x_*)>1/k\}$ must be infinite. This implies that the finite sums can be arbitrarily large and the supremum is therefore infinite.
\end{remark}

Throughout the paper we will use the convention $\exp(\infty)=\infty$.

\medskip

The following lemma shows that the Green functions corresponding to different normalization points are comparable near $\E$.

\begin{lemma}\label{G-compar-lem}
Let $\E\subset\bbR$ be a compact non-polar set and $x_1,x_2\in\ol{\bbR}\bs\E$. Then there is a neighborhood $U$ of $\E$ and a constant $C>1$ such that, for all $z\in U$,
\begin{align}\label{G-compar}
    C^{-1}G_\E(z,x_1)\le G_\E(z,x_2)\le C G_\E(z,x_1).
\end{align}
\end{lemma}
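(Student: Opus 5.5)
The natural tool is a Harnack-type or maximum-principle comparison of two positive harmonic functions that share the same boundary behaviour on $\E$. Assume $x_1\ne x_2$; the case $x_1=x_2$ is trivial. Choose disjoint closed disks $D_1,D_2$ centered at $x_1,x_2$ (or neighborhoods of $\infty$) whose closures lie in $\Omega_\E$. Set $U=\ol{\bbC}\bs(D_1\cup D_2)$, which is an open neighborhood of $\E$. It suffices to prove the upper bound $G_\E(z,x_2)\le C\,G_\E(z,x_1)$ on $U\cap\Omega_\E$ together with the corresponding inequality at points of $\E$. The lower bound then follows by symmetry, swapping the roles of $x_1$ and $x_2$.

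\textbf{Comparison in the open set.} On the open set $V=\Omega_\E\bs(\ol D_1\cup\ol D_2)$, both $G_\E(\cdot,x_1)$ and $G_\E(\cdot,x_2)$ are positive and harmonic. On the compact curves $\partial D_1\cup\partial D_2$ both functions are continuous and strictly positive. Hence
\[
C=\max_{\partial D_1\cup\partial D_2}\frac{G_\E(\cdot,x_2)}{G_\E(\cdot,x_1)}
\]
is finite, and we may enlarge it so that $C>1$.

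Now consider $h=G_\E(\cdot,x_2)-C\,G_\E(\cdot,x_1)$ on $V$. This function is harmonic and bounded on $V$, since neither pole lies in $\ol V$. By the choice of $C$ we have $h\le0$ on $\partial D_1\cup\partial D_2$. At quasi-every point of $\E$ (namely the regular points), $\limsup h\le\limsup G_\E(\cdot,x_2)=0$. Since $\E^\ir$ is polar by Kellogg's theorem, the extended maximum principle for bounded subharmonic functions \cite{Ran95}, which permits a polar exceptional set of boundary points, gives $h\le0$ on $V$. Equivalently, $G_\E(z,x_2)\le C\,G_\E(z,x_1)$ for all $z\in V$. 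On $U\cap(\ol{D_1}\cup\ol{D_2})$, i.e.\ on the circles themselves, the inequality already holds by the definition of $C$.

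\textbf{Extension to $\E$.} For $x\in\E$ the Green function values are upper boundary limits. Taking $\limsup_{z\to x}$ of the inequality, which holds in $V$ near $x$, yields $G_\E(x,x_2)\le C\,G_\E(x,x_1)$.

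The main subtlety is precisely this use of the extended maximum principle. Because $\E$ need not be regular, $h$ need not have boundary limit $\le0$ at irregular points. One must therefore rely on boundedness of $h$ together with polarity of $\E^\ir$, rather than on the classical maximum principle. A minor additional point is the handling of $x_j=\infty$: either replace the disk by the complement of a large disk, or apply a Möbius transformation as in Section~\ref{Sec2.1}.
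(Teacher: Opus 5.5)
Your proof is correct, but it takes a different route from the paper.

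\textbf{The paper's route.} The paper never compares the two functions in the $z$-variable. It takes $U$ to be a small $\eps$-neighborhood of $\E$ and notes that $\Omega_\E\bs\ol U$ is a connected open set containing $x_1$ and $x_2$. For each fixed $z\in U\bs\E$ it applies Harnack's inequality, on the compact set $\{x_1,x_2\}$, to the positive harmonic function $w\mapsto G_\E(z,w)$; this uses symmetry of the Green function. The Harnack constant depends only on the domain and on $\{x_1,x_2\}$, so it is uniform in $z$. The values on $\E$ then follow by taking upper limits, exactly as in your last step.

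\textbf{Your route.} You compare $G_\E(\cdot,x_2)$ and $C\,G_\E(\cdot,x_1)$ as functions of $z$ on $V=\Omega_\E\bs(\ol D_1\cup\ol D_2)$ via the extended maximum principle. This makes you control the polar set $\E^\ir$, where $h$ may not have boundary limit $\le0$. To do that you need Kellogg's theorem, the Green-function characterization of regular points, and boundedness of $G_\E(\cdot,x_j)$ away from its pole.

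\textbf{What each buys.} The paper's argument is shorter because irregular points play no role until the final $\limsup$. Yours gives an explicit constant, namely the maximum of the ratio on the two circles. It also gives the comparison on the whole complement of two small disks about the poles, not just near $\E$, and it does not use symmetry of the Green function.

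\textbf{Two minor points.} Since you take $D_1,D_2$ closed, $U$ does not meet the circles, so your remark about $U\cap(\ol D_1\cup\ol D_2)$ is vacuous. Also, if $V$ were disconnected, apply the extended maximum principle on each component. Each component has an arc of one of the circles in its boundary, so its boundary is nonpolar.
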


\begin{proof}
Let $U$ be an $\eps$-neighborhood of $\E$, with $\eps>0$ small enough that $\overline U$ does not contain $x_1$ or $x_2$. 
Then $\Omega_\E\bs\overline U$ is an open connected set containing $x_1,x_2$.

For each fixed $z\in U\bs\E$, the function $w\mapsto G_\E(z,w)$ is positive and harmonic in $\Omega_\E\bs\overline U$. By Harnack's inequality, applied in the $w$-variable on the compact set $\{x_1,x_2\}\subset\Omega_\E\bs\overline U$, there is a constant $C>1$, independent of $z\in U\bs\E$, such that \eqref{G-compar} holds. Then taking upper limits as $U\bs\E\ni z\to x\in\E$ gives the same comparison for the boundary values at all points of $\E$.
\end{proof}

\begin{corollary}\label{IR-PW-inv}
Let $\E\subset\bbR$ be a compact non-polar set. The finiteness of $\IR(\E,x_*)$ is independent of the choice of $x_*\in\ol{\bbR}\bs\E$. If $\E$ is semi-regular, then the finiteness of $\PW(\E,x_*)$ is also independent of the choice of $x_*\in\ol{\bbR}\bs\E$.
\end{corollary}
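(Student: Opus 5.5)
The plan is to apply Lemma~\ref{G-compar-lem} directly to the sums defining $\IR$ and $\PW$. Fix $x_1,x_2\in\ol{\bbR}\bs\E$, and let $U$ and $C>1$ be the neighborhood and constant provided by Lemma~\ref{G-compar-lem}.

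For the irregularity coefficient, the set $\E^\ir$ does not depend on the normalization point, since irregularity is characterized by $G_\E(x,x_*)>0$ independently of $x_*$. Every $x\in\E^\ir$ lies in $\E\subset U$, so \eqref{G-compar} holds termwise for the boundary values:
\[
C^{-1}G_\E(x,x_1)\le G_\E(x,x_2)\le CG_\E(x,x_1).
\]
Summing over finite subsets $F\subset\E^\ir$ and passing to the supremum, as in the Remark, gives
\[
C^{-1}\IR(\E,x_1)\le\IR(\E,x_2)\le C\,\IR(\E,x_1).
\]
This also covers the uncountable case, where both quantities are infinite. Hence the finiteness of $\IR$ does not depend on $x_*$.

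For the Parreau--Widom constant, I would pass to $\E^\reg$, which is compact and regular, and whose Green functions agree with those of $\E$. The critical points do depend on $x_*$, so a termwise comparison needs more care. I would split the gaps of $\E^\reg$ into two classes. The first class consists of the finitely many gaps that contain $x_1$ or $x_2$, or that are not contained in $U$. There are only finitely many such gaps, because $\E^\reg$ is compact and only finitely many gaps have length $\ge\eps$. Each of these gaps contributes at most one critical point with a finite Green value, so together they change each sum by only a finite amount. Every remaining gap $(a,b)$ lies in $U$, contains no pole, and contains exactly one critical point $c_j$ of $G(\cdot,x_j)$ for each $j=1,2$. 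On such a gap, $G(\cdot,x_j)$ is positive, vanishes at $a$ and $b$, and is concave on the real segment; indeed, on $\bbR\bs\E$ near the gap, $G$ is harmonic and symmetric with $\partial_y^2G>0$ there. Therefore $G(c_j,x_j)=\max_{[a,b]}G(\cdot,x_j)$. Using \eqref{G-compar},
\[
G(c_2,x_2)=\max_{[a,b]}G(\cdot,x_2)\le C\max_{[a,b]}G(\cdot,x_1)=C\,G(c_1,x_1),
\]
and the reverse inequality follows in the same way. Summing over the remaining gaps compares the two tails of the $\PW$ sums up to the factor $C$, which proves that finiteness of $\PW$ is independent of $x_*$.

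The step I expect to be the main obstacle is justifying that the critical value on each gap equals the maximum of $G$ over that gap. If the concavity argument is not immediate, I would instead argue that $G(\cdot,x_j)$ restricted to the gap has a unique critical point, namely $c_j$, and that this point must therefore be its maximum, since $G$ is positive inside the gap and vanishes at its endpoints.
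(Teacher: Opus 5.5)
Your proposal is correct and essentially reproduces the paper's proof. For $\IR$ you compare boundary values termwise via Lemma~\ref{G-compar-lem}; for $\PW$ you discard the finitely many gaps of $\E^\reg$ not contained in $U$ and compare the critical values on the remaining gaps using maximality of the critical value on each gap. One caveat: your concavity justification only works for $x_*=\infty$. For a finite pole, $\partial_y^2G(x,x_*)=\int(x-t)^{-2}\,d\omega_{x_*}(t)-(x-x_*)^{-2}$, where $\omega_{x_*}$ is harmonic measure, and this can be negative in the middle of a bounded gap flanked by very thin pieces of the set. So rely on your fallback argument: a unique critical point, $G>0$ inside the gap, and $G=0$ at the regular endpoints. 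That is exactly the maximality the paper invokes.
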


\begin{proof}
The assertion for $\IR$ follows directly from Lemma~\ref{G-compar-lem}, since all irregular points lie in $\E$ and the boundary values of the two Green functions are comparable there.

For $\PW$ we apply a similar argument to $\E^\reg$. Let $x_1,x_2\in\ol\bbR\bs\E$ be two normalization points and $U$ be a small $\eps$-neighborhood of $\E^\reg$ from Lemma~\ref{G-compar-lem}. Only finitely many gaps of $\E^\reg$ are not contained in $U$, and hence such gaps do not affect convergence of the Parreau--Widom sum.

Let $L\subset U$ be a gap of $\E^\reg$. Then $x_1,x_2\not\in L$ since the same holds for $U$. Let $c_j$ be the critical point of $G_{\E^\reg}(\cdot,x_j)$ in $L$, $j=1,2$. By \eqref{G-compar} and by maximality of the critical value in the gap,
\[
    G_{\E^\reg}(c_1,x_1)
    \le
    C G_{\E^\reg}(c_1,x_2)
    \le
    C G_{\E^\reg}(c_2,x_2).
\]
After summing over all such gaps, and adding the finite contribution from the gaps not contained in $U$, convergence of $\PW(\E^\reg,x_2)$ implies convergence of $\PW(\E^\reg,x_1)$. Interchanging $x_1$ and $x_2$ gives the converse.
\end{proof}

\subsection{Extremal polynomials and inverse-image sets}\label{Sec2.3}

Here we recall several facts about Chebyshev and residual extremal polynomials proved in \cite{CSZ1,CSZ5}. Given a compact non-polar set $\E\subset\bbR$, a normalization point $x_*\in\ol{\bbR}\bs\E$, and the corresponding extremal polynomial $T_{n,x_*}$, let
\[
    t_{n,x_*}(\E)=\|T_{n,x_*}\|_\E,
    \quad
    d_n=\deg T_{n,x_*}.
\]
The extremal norm is monotone with respect to set inclusion, that is, if $K\subset\E$ and $x_*\in\ol{\bbR}\bs\E$, then $t_{n,x_*}(\E)\ge t_{n,x_*}(K)$ since every admissible polynomial for $\E$ is admissible for $K$ and $\|P\|_K\le\|P\|_\E$.

The extremal polynomials satisfy the alternation theorem, which yields the properties recalled below.

The extremal polynomials for normalization points $x_1$, $x_2$ lying in the same gap of $\E$ are scalar multiples of one another, $T_{n,x_1}=c_{n,x_1,x_2} T_{n,x_2}$ for some $z$-independent constant $c_{n,x_1,x_2}$. The degrees satisfy $n-1\le d_n\le n$. In the Chebyshev case $d_n=n$ and hence the same holds for all $x_*$ in the exterior gap of $\E$. The exceptional possibility $d_n=0$ can occur only when $n=1$, in which case $T_{1,x_*}\equiv 1$.

For $d_n\ge1$ define
\begin{align*}
    \E_n=T_{n,x_*}^{-1}\big([-t_{n,x_*}(\E),t_{n,x_*}(\E)]\big).
\end{align*}
Then $\E\subset\E_n\subset\bbR$ and $\E_n$ is a finite union of $d_n$ compact intervals, called bands, such that each band is mapped bijectively by $T_{n,x_*}$ onto $[-t_{n,x_*}(\E),t_{n,x_*}(\E)]$. In particular, all zeros of $T_{n,x_*}$ lie in $\E_n$ and are real and simple.
The rescaling property of the extremal polynomials implies that $\E_n$ is
independent of the choice of $x_*$ within a given gap of $\E$.

Each gap of $\E$ intersects at most one band of $\E_n$. The gap of $\E$ containing the reference point $x_*$ is disjoint from $\E_n$ and if $d_n=n-1$ the exterior gap is also disjoint from $\E_n$. Consequently, $T_{n,x_*}$ has at most one zero in each gap of $\E$, no zeros in the gap containing $x_*$ and if $d_n=n-1$ also no zeros in the exterior gap of $\E$.

Let $\mu_n$ denote the equilibrium measure of $\E_n$. Each band of $\E_n$ has $\mu_n$-measure $1/d_n$. Consequently, if $K$ is a gap of $\E$ that intersects $\E_n$, then
\begin{align}\label{gap-meas}
    \mu_n(K)\le \frac1{d_n}.
\end{align}

The norms of the extremal polynomials can be expressed in terms of $\E_n$:
\begin{align}
    \|T_n\|_\E &=2\ca(\E_n)^n,
    \label{norm-ca-cheb}
    \\
    \|T_{n,x_*}\|_\E &=\frac{1}{\cosh(d_nG_{\E_n}(x_*))},
    \quad x_*\neq\infty.
    \label{norm-ca-res}
\end{align}
Then by \eqref{Wn-cheb}, \eqref{Wn-res}, and \eqref{Cap-res},
\begin{align}
    \W_n(\E,\infty)
    &=2\frac{\ca(\E_n)^n}{\ca(\E)^n},
    \label{WnEn-cheb}
    \\
    \begin{split}
    \W_n(\E,x_*)
    &=2\frac{\exp[nG_\E(x_*)]+\exp[-nG_\E(x_*)]} {\exp[d_nG_{\E_n}(x_*)]+\exp[-d_nG_{\E_n}(x_*)]} \\
    &=2\frac{\exp[nG_\E(x_*)]}{\exp[d_nG_{\E_n}(x_*)]} \,
      \frac{1+\exp[-2nG_\E(x_*)]}
           {1+\exp[-2d_nG_{\E_n}(x_*)]}, \quad x_*\neq\infty.
    \end{split}
    \label{WnEn-res}
\end{align}

Similarly the normalized extremal polynomials $T_{n,x_*}(z)/\|T_{n,x_*}\|_\E$ can be expressed in terms of the complex Green functions of $\Om_{\E_n}$, see \cite[Eq.~(2.4)]{CSZ1} for the Chebyshev case and \cite[Eq.~(3.9)]{CSZ5} for residual polynomials. For our purposes we will only need the following modulus consequence,
\begin{align}\label{Tn-pointwise}
    \frac{|T_{n,x_*}(z)|}{\|T_{n,x_*}\|_\E}
    \le
    \frac12\bigl(e^{d_nG_{\E_n}(z)}+e^{-d_nG_{\E_n}(z)}\bigr),
    \quad z\in\Omega_{\E_n}\bs\{\infty\}.
\end{align}

To derive our result below, we will use the difference of Green functions,
\begin{align}\label{De-def}
    \De_{\E,n}(z)=G_\E(z)-G_{\E_n}(z),
    \quad z\in\Omega_{\E_n}\bs\{\infty\}.
\end{align}
The function $\De_{\E,n}$ extends harmonically to $z=\infty$ with the value
\begin{align}
    \De_{\E,n}(\infty)
    =
    \lim_{z\to\infty}\bigl(G_\E(z)-G_{\E_n}(z)\bigr)
    =
    \log\frac{\ca(\E_n)}{\ca(\E)}.
\end{align}
Since $\E\subset\E_n$, monotonicity of Green functions gives $\De_{\E,n}\ge0$ on $\Omega_{\E_n}$.
We will use the representation
\begin{align}\label{De-rep-z}
    \De_{\E,n}(z)
    =
    \int_{\E_n\bs\E^\reg}G_\E(x,z)\,d\mu_n(x)
    =
    \int_{\E_n\bs\E}G_\E(x,z)\,d\mu_n(x),
    \quad z\in\Omega_{\E_n},
\end{align}
see \cite[Prop.~4.3]{CSYZ2}, \cite[Lem.~3.6]{ELY24}.
Indeed, for fixed finite $z\in\Omega_{\E_n}$, the function
\[
    w\mapsto G_\E(w,z)-G_{\E_n}(w,z)
\]
extends harmonically to $\Omega_{\E_n}$ due to cancellation of singularities at $w=z$ and has boundary values $G_\E(\cdot,z)$ quasi-everywhere on $\E_n$. Thus it is the Perron solution of the Dirichlet problem in $\Omega_{\E_n}$ with boundary data $G_\E(\cdot,z)$. Since harmonic measure at infinity for $\Omega_{\E_n}$ is the equilibrium measure $\mu_n$, evaluating this solution at infinity gives
\[
    G_\E(\infty,z)-G_{\E_n}(\infty,z)
    =
    \int_{\E_n}G_\E(x,z)\,d\mu_n(x).
\]
By symmetry of Green functions, the left hand side is $\De_{\E,n}(z)$.
Since $G_\E(\cdot,z)=0$ on $\E^\reg$, this gives the first equality in \eqref{De-rep-z}. The second equality follows since $\E^\ir$ is polar and equilibrium measures do not charge polar sets. The case $z=\infty$ follows by harmonic extension.

In particular, for every $x_*\in\ol{\bbR}\bs\E_n$, we have
\begin{align}\label{De-rep}
    \De_{\E,n}(x_*)
    =
    \int_{\E_n\bs\E^\reg}G_\E(x,x_*)\,d\mu_n(x)
    =
    \int_{\E_n\bs\E}G_\E(x,x_*)\,d\mu_n(x).
\end{align}
On the other hand, \eqref{De-def} together
with the definitions of $\ca(\cdot,x_*)$ gives
\begin{align*}
    \De_{\E,n}(x_*)
    =
    \log\frac{\ca(\E_n,x_*)}{\ca(\E,x_*)},
    \qquad x_*\in\ol{\bbR}\bs\E_n.
\end{align*}
Then using \eqref{WnEn-cheb}, \eqref{WnEn-res}, and \eqref{De-def}, we can express the Widom factors in terms of $\De_{\E,n}$. In the Chebyshev case $x_*=\infty$ we get
\begin{align}\label{WnDe-cheb}
    \W_n(\E,\infty)=2\exp\big[n\De_{\E,n}(\infty)\big],
\end{align}
and in the residual case $x_*\in\bbR\bs\E$ we get
\begin{align}\label{WnDe-res}
    \W_n(\E,x_*)
    =
    2\exp\big[d_n\De_{\E,n}(x_*)+(n-d_n)G_\E(x_*)\big]\,
    \frac{1+\exp[-2nG_\E(x_*)]}{1+\exp[-2d_nG_{\E_n}(x_*)]}.
\end{align}
For later use, note that the last fraction in \eqref{WnDe-res}
tends to one as $n\to\infty$. Indeed, by the root asymptotics $\|T_{n,x_*}\|_\E^{1/n}\to\ca(\E,x_*)$ and the norm
identity \eqref{norm-ca-res},
\[
    \cosh^{1/n}(d_nG_{\E_n}(x_*))
    =
    \|T_{n,x_*}\|_\E^{-1/n}
    \to
    \exp[G_\E(x_*)].
\]
Then, since $\cosh(y)\le e^y\le2\cosh(y)$ for $y\ge0$,
\[
    \exp\left[\frac{d_n}{n}G_{\E_n}(x_*)\right]
    \to
    \exp[G_\E(x_*)].
\]
Thus $(d_n/n)G_{\E_n}(x_*)\to G_\E(x_*)>0$. It follows that both exponential terms in the last fraction in \eqref{WnDe-res} tend to zero, so
\begin{align}\label{WnDe-res-asymp}
    \W_n(\E,x_*)
    =
    2\exp\big[d_n\De_{\E,n}(x_*)+
    (n-d_n)G_\E(x_*)\big](1+o(1)).
\end{align}

Finally, we recall the Bernstein--Walsh inequality. If $P$ is a polynomial of degree at most $m$, then
\begin{align}\label{BW-ineq}
    |P(z)|
    \le
    \|P\|_\E\exp[mG_\E(z)],
    \quad z\in\bbC.
\end{align}

\subsection{Changing the normalization point}

We next show that boundedness of Widom factors does not depend on the normalization point. The result holds for arbitrary compact non-polar subsets of $\bbR$.

\begin{theorem}\label{Wn-ind-thm}
Let $\E\subset\bbR$ be a compact non-polar set. Then boundedness of the Widom factors is independent of the normalization point. More precisely, for any $x_1,x_2\in\ol{\bbR}\bs\E$,
\[
    \sup_{n\ge1}\W_n(\E,x_1)<\infty
    \quad\Longleftrightarrow\quad
    \sup_{n\ge1}\W_n(\E,x_2)<\infty.
\]
\end{theorem}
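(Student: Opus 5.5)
Since the Widom factors are expressed through $\De_{\E,n}$ by \eqref{WnDe-cheb} and \eqref{WnDe-res-asymp}, the plan is to compare $d_n\De_{\E,n}(x_1)$ with $d_n\De_{\E,n}(x_2)$ using the representation \eqref{De-rep} together with the Harnack comparison of Lemma~\ref{G-compar-lem}. The points $x_1,x_2$ need not lie outside $\E_n$, so I first reduce to a single gap. Suppose $\sup_n\W_n(\E,x_1)<\infty$.

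\emph{Step 1 (bounded Widom factors force $d_n=n$ eventually, or control the defect).} By \eqref{WnDe-res-asymp}, $\W_n(\E,x_1)\ge 2\exp[(n-d_n)G_\E(x_1)](1+o(1))$, since $\De_{\E,n}\ge0$. This by itself does not force $d_n=n$, because $n-d_n\in\{0,1\}$ and the factor is then bounded anyway. So the degree defect contributes only a bounded factor $e^{G_\E(x_j)}$, and the task reduces to bounding $d_n\De_{\E,n}(x_2)$ in terms of $d_n\De_{\E,n}(x_1)$, where $\E_n$ is the set associated with $x_1$.

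\emph{Step 2 (switching the normalization).} The extremal problem for $x_2$ has its own inverse-image set. The cleaner route is to bound $t_{n,x_2}(\E)$ from above by a test polynomial built from $T_{n,x_1}$. Fix a neighborhood $U$ of $\E$ from Lemma~\ref{G-compar-lem} that avoids $x_1,x_2$. The set $\E_n$ has at most one band in each gap of $\E$, there are only finitely many gaps not contained in $U$, and no band meets the gap of $x_1$. Consider the modified set $\E_n'=\E_n\cap\ol U$, or $\E_n$ with the band in the gap of $x_2$ (if there is one) removed, so that $x_2\notin\E_n'$. Removing one band of equilibrium mass $1/d_n$ changes the Green function by a controlled amount. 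Concretely, I will use $P=T_{n,x_1}\cdot(z-y_n)^{-1}$, where $y_n$ is the zero of $T_{n,x_1}$ in the gap containing $x_2$. This $P$ has degree $d_n-1$, and its norm on $\E$ is bounded by $C\|T_{n,x_1}\|_\E$, because $\dist(y_n,\E)$ is not small: the band containing $y_n$ lies inside that gap, away from $\E$, or, if it is close to $\E$, we use the Bernstein–Walsh inequality \eqref{BW-ineq} instead. Then normalize $P$ at $x_2$ and estimate $|P(x_2)|$ from below by means of the Green function of $\E_n'$. This yields $t_{n,x_2}(\E)\lesssim t_{n,x_1}(\E)\exp[-d_nG_{\E_n'}(x_2)]$ up to constants. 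It then remains to show that $d_n(G_\E(x_2)-G_{\E_n'}(x_2))$ is bounded.

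\emph{Step 3 (main obstacle).} By \eqref{De-rep} applied to $\E_n'$, we have $\De(x_2)=\int_{\E_n'\bs\E}G_\E(x,x_2)\,d\mu_n'(x)$. The bands lying in $U$ satisfy $G_\E(x,x_2)\le CG_\E(x,x_1)$ by \eqref{G-compar}. The finitely many bands outside $U$ each carry mass $O(1/d_n)$ by \eqref{gap-meas}, so their contribution to $d_n\De$ is $O(1)$. The delicate point is comparing $\mu_n'$ with $\mu_n$ after a band is deleted, and bounding $\int G_\E(\cdot,x_1)\,d\mu_n$ by $d_n^{-1}\log\W_n(\E,x_1)$. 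For the first issue I would use that $\mu_n'\le\mu_n\cdot(1+O(1/d_n))$ on the remaining bands, via balayage of the removed band. This gives $d_n\De_{\E,n'}(x_2)\le C\,d_n\De_{\E,n}(x_1)+O(1)$, hence $\sup_n\W_n(\E,x_2)<\infty$. The argument is symmetric in $x_1$ and $x_2$. For $x_j=\infty$ the same argument applies, with \eqref{WnDe-cheb} used in place of \eqref{WnDe-res-asymp}.
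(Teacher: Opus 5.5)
Your Step 2 starts the way the paper does: build a test polynomial from $T_{n,x_1}$ by deleting one zero, then normalize at $x_2$. But neither of the two estimates this needs is established.

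\textbf{The norm bound.} You claim $\|T_{n,x_1}/(z-y_n)\|_\E\le C\|T_{n,x_1}\|_\E$, but the justification is wrong. The band of $\E_n$ meeting the gap of $x_2$ need not stay inside that gap; it can cover an endpoint of the gap, so $y_n$ can approach $\E$ as $n$ varies. For $x\in\E$ near $y_n$ you then have $|P(x)|\approx|T_{n,x_1}'(y_n)|$, which is typically of order $t_{n,x_1}(\E)$ divided by the band length. So $\|P\|_\E/\|T_{n,x_1}\|_\E$ need not stay bounded, and Bernstein--Walsh, which controls growth off $\E$, cannot repair this. The paper removes only a zero lying in a fixed compact interval $I\ni x_2$ inside the gap, separated from $\E$ and $x_1$. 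A zero elsewhere in the gap is harmless at $x_2$ and is kept.

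\textbf{The lower bound at $x_2$ (the real gap).} You give no mechanism for bounding $|P(x_2)|$ from below ``by means of the Green function of $\E_n'$''. Green functions yield upper bounds for polynomials. The only lower bound coming from inverse images, the $\cosh$ identity behind \eqref{norm-ca-res}, holds for $T_{n,x_1}$ relative to $\E_n$, not for $P$ relative to $\E_n'$. It is useless when $x_2$ lies in a band anyway, since then $G_{\E_n}(x_2)=0$. Your displayed inequality is also off: the target is $t_{n,x_2}(\E)\lesssim e^{-nG_\E(x_2)}$, and the extra factor $t_{n,x_1}(\E)$ would contradict \eqref{LB}.

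\textbf{The missing idea.} Apply Harnack's inequality to $\log$ of a polynomial with real zeros. Let $Q_n$ be $T_{n,x_1}$ with its zero in $I$ removed, and set $u_n(z)=\log C_1-nG_\E(x_1)+nG_\E(z)-\log|Q_n(z)|$. This $u_n$ is nonnegative by Bernstein--Walsh. It is harmonic on a fixed connected open set containing $x_1,x_2$ and no zeros of any $Q_n$: a thin neighborhood of a path from $x_1$ to $x_2$ through the upper half-plane, which is zero-free because all zeros are real. Since $u_n(x_1)$ is bounded, so is $u_n(x_2)$. That is exactly $|Q_n(x_2)|\gtrsim e^{n(G_\E(x_2)-G_\E(x_1))}$, and then $Q_n/Q_n(x_2)$ is admissible at $x_2$. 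This bypasses $\E_n'$, balayage and \eqref{De-rep} entirely.

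\textbf{Further problems in Step 3.}
\begin{enumerate}
\item[(i)] Deleting the band that meets the gap of $x_2$ can delete points of $\E$, so \eqref{De-rep} need not apply to $\E_n'$.
\item[(ii)] The bound $\mu_n'\le(1+O(1/d_n))\mu_n$ is false in general, because the balayage of the deleted band concentrates on neighboring bands. What does hold for $\E_n'\subset\E_n$ is $\mu_n'=\mu_n|_{\E_n'}+\nu$ with $\nu\ge0$ of total mass $\mu_n(\E_n\bs\E_n')$.
\item[(iii)] The case $x_j=\infty$ does not follow by ``the same argument''. The paper treats it separately. For $x_*$ in the exterior gap one has $d_n=n$ and the same $\E_n\subset\cvh(\E)$. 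Harnack's inequality for the nonnegative harmonic function $\De_{\E,n}$ on $\ol\bbC\bs\cvh(\E)$ then compares $\De_{\E,n}(x_*)$ with $\De_{\E,n}(\infty)$.
\end{enumerate}
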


\begin{proof}
It is enough to compare any two finite normalization points and then compare one finite point outside the convex hull $\cvh(\E)$ of $\E$ with $\infty$.

First let $x_1,x_2\in\bbR\bs\E$ and assume $x_1\neq x_2$. Suppose that $\sup_n\W_n(\E,x_1)<\infty$. Then there is a constant $M$ such that for all $n\ge1$,
\[
    \|T_{n,x_1}\|_\E\le Me^{-nG_\E(x_1)}.
\]
We recall that in the gap $K_0$ of $\E$ containing $x_1$, $T_{n,x_1}$ has no zeros.

Choose a compact interval $I=[\alpha,\beta]\subset\bbR\bs(\E\cup\{x_1\})$ with $x_2\in(\alpha,\beta)$. Then $I$ is contained in a gap of $\E$, and hence $T_{n,x_1}$ has at most one zero in $I$. Let $Q_n$ be $T_{n,x_1}$ with this possible zero in $I$ removed, that is, $Q_n=T_{n,x_1}/S_n$, where $S_n=1$ if $T_{n,x_1}$ has no zero in $I$ and
$S_n(z)=z-\zeta_n$ if $\zeta_n\in I$ is the zero of $T_{n,x_1}$ in $I$. Then $Q_n$ has no zeros in $I$. Since $I$ is separated from both $\E$ and
$x_1$, we have, for some constant $C_1\ge1$ and all $n\ge1$,
\[
    \|Q_n\|_\E\le C_1e^{-nG_\E(x_1)}, \qquad
    C_1^{-1}\le |Q_n(x_1)|\le C_1.
\]

Since all zeros are real, the absence of zeros of $Q_n$ on $I$ gives a disk centered at $x_2$, independent of $n$, on which $Q_n$ has no zeros. Similarly, because $Q_n$ has no zeros in a neighborhood of $x_1$ contained in the gap containing $x_1$, there is such a disk centered at $x_1$. Connecting $x_1$ to $x_2$ by a path through the upper half-plane and taking a sufficiently small neighborhood of the path, we obtain a fixed connected open set $U\subset\Omega_\E\bs\{\infty\}$ containing $x_1$ and $x_2$ on which no $Q_n$ has a zero.

By the Bernstein--Walsh inequality and $\deg Q_n\le n$,
\[
    |Q_n(z)|
    \le \|Q_n\|_\E e^{nG_\E(z)}
    \le C_1e^{-nG_\E(x_1)+nG_\E(z)}, \quad z\in\Omega_\E.
\]
Hence
\[
    u_n(z)=\log C_1-nG_\E(x_1)+nG_\E(z)-\log|Q_n(z)|
\]
is nonnegative and harmonic on $U$. Since
$u_n(x_1)=\log C_1-\log|Q_n(x_1)|\le2\log C_1$, Harnack's inequality gives
$u_n(x_2)\le C_2$ for all $n$. Hence,
\[
    |Q_n(x_2)|\ge C_3e^{-nG_\E(x_1)+nG_\E(x_2)}.
\]
Therefore $Q_n/Q_n(x_2)$ is admissible for the residual problem at $x_2$, and we get
\[
    \|T_{n,x_2}\|_\E\le \frac{\|Q_n\|_\E}{|Q_n(x_2)|}\le C_4e^{-nG_\E(x_2)}.
\]
Multiplying by $e^{nG_\E(x_2)}+e^{-nG_\E(x_2)}$ shows that the Widom factors $\W_n(\E,x_2)$ are bounded. Thus boundedness for one finite normalization point implies boundedness for every other finite normalization point.

It remains to compare a finite normalization point with $\infty$.  Choose $x_*$ in the exterior gap of $\E$. Then it follows from Section~\ref{Sec2.3} that $d_n=n$, $\E_n$ is the same for both normalization points $x_*$ and $\infty$, and $\E_n$ does not intersect the exterior gap of $\E$, hence $\E_n\subset\cvh(\E)$.  By \eqref{WnDe-res},
\[
    \W_n(\E,x_*)
    =
    2\exp[n\De_{\E,n}(x_*)]
    \frac{1+\exp[-2nG_\E(x_*)]}{1+\exp[-2nG_{\E_n}(x_*)]}.
\]
The last factor is bounded above and below by $2$ and $2^{-1}$, respectively. Hence boundedness of $\W_n(\E,x_*)$ is equivalent to boundedness of $n\De_{\E,n}(x_*)$.

On the other hand, \eqref{WnDe-cheb} gives
\[
    \W_n(\E,\infty)=2\exp\left[n\De_{\E,n}(\infty)\right].
\]
Recall that $\De_{\E,n}(z)$ is nonnegative and harmonic in the domain $\Om_{\E_n}$ and hence also in the fixed domain $\ol{\bbC}\bs\cvh(\E)$.
By Harnack's inequality in this fixed domain, $\De_{\E,n}(x_*)$ and $\De_{\E,n}(\infty)$ are comparable by constants independent of $n$. Therefore boundedness of $\W_n(\E,x_*)$ is equivalent to boundedness of $\W_n(\E,\infty)$.
\end{proof}

Combining Theorem~\ref{Wn-ind-thm} with the converse theorem of \cite{CSYZ2} for generic regular sets gives the following residual version of that result.

\begin{corollary}\label{Wn-PW-eqv-cor}
Let $\E\subset\bbR$ be a regular compact set and suppose that $\E$ satisfies the generic rational-independence hypothesis of \cite{CSYZ2}. Then for every $x_*\in\ol{\bbR}\bs\E$,
\[
    \sup_{n\ge1}\W_n(\E,x_*)<\infty
    \quad\Longleftrightarrow\quad
    \PW(\E,x_*)<\infty.
\]
\end{corollary}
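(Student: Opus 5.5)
The plan is to deduce Corollary~\ref{Wn-PW-eqv-cor} from Theorem~\ref{Wn-ind-thm}, the Chebyshev case from \cite{CSYZ2}, and the Totik--Widom bound \eqref{UB-reg}. Fix $x_*\in\ol{\bbR}\bs\E$ and prove the two implications separately.

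For the direction $\PW(\E,x_*)<\infty\Rightarrow\sup_n\W_n(\E,x_*)<\infty$, I use regularity of $\E$ and the bound \eqref{UB-reg}, proved in \cite{CSZ1} for $x_*=\infty$ and in \cite{CSZ5} for finite $x_*$. It gives $\sup_n\W_n(\E,x_*)\le 2\exp[\PW(\E,x_*)]<\infty$ directly. No genericity hypothesis is needed for this direction.

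For the converse, assume $\sup_n\W_n(\E,x_*)<\infty$. Theorem~\ref{Wn-ind-thm}, applied with $x_1=x_*$ and $x_2=\infty$, gives $\sup_n\W_n(\E,\infty)<\infty$. The Chebyshev Widom factors are therefore bounded. Since $\E$ is regular and satisfies the rational-independence hypothesis, the converse theorem of \cite{CSYZ2} shows that $\E$ is a Parreau--Widom set, meaning $\PW(\E,\infty)<\infty$. Corollary~\ref{IR-PW-inv}, applied to the regular (hence semi-regular, with $\E^\reg=\E$) set $\E$, transfers this finiteness to the original point: $\PW(\E,x_*)<\infty$.

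The only point needing care is the hypothesis of \cite{CSYZ2}. It should be a condition on the set $\E$ alone, for instance rational independence of the harmonic measures of gaps as seen from $\infty$. In that case it is the hypothesis needed to invoke the Chebyshev result, and no residual analogue at $x_*$ is required, because Theorem~\ref{Wn-ind-thm} reduces everything to $x_*=\infty$. I expect this bookkeeping to be the only subtle point; the rest is a direct chain of the cited results.
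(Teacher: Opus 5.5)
Your proposal is correct and follows the paper's proof essentially step for step. The paper also uses \eqref{UB-reg} for the forward direction. For the converse it likewise applies Theorem~\ref{Wn-ind-thm} to pass to $x_*=\infty$, invokes the generic converse theorem of \cite{CSYZ2}, and then transfers finiteness of the Parreau--Widom constant back to $x_*$ via Corollary~\ref{IR-PW-inv}.
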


\begin{proof}
If $\PW(\E,x_*)<\infty$, boundedness follows from the regular Totik--Widom bound \eqref{UB-reg}. Conversely, if $\W_n(\E,x_*)$ is bounded for some $x_*$, Theorem~\ref{Wn-ind-thm} implies that $\W_n(\E,\infty)$ is bounded. The generic converse theorem of \cite{CSYZ2} gives $\PW(\E,\infty)<\infty$. By Corollary~\ref{IR-PW-inv}, finiteness of the Parreau--Widom constant is independent of the pole, so $\PW(\E,x_*)<\infty$.
\end{proof}

\section{Totik--Widom upper bound}\lb{Sec3}

In this section we prove upper bounds for the Widom factors.  Let $\E\subset\bbR$ be a compact non-polar set, $x_*\in\ol\bbR\bs\E$, and denote by $\cc G(\E,x_*)$ the collection of gaps of $\E$ which do not contain $x_*$.  Define the gap-height constant
\begin{align}\lb{g-height}
    \cc H(\E,x_*)=
    \sum_{K\in\cc G(\E,x_*)}\sup_{x\in K}G_\E(x,x_*).
\end{align}
The sum is interpreted as $\infty$ if it diverges and as zero if $\cc G(\E,x_*)$ is empty.

\begin{remark}
For $x_*=\infty$, the gap-height sum has a useful interpretation in terms of comb domains. Let $F=\overline{\E^\reg}$ which is the support of the equilibrium measure $\mu_\E$. Since $\E\bs F\subset\E^\ir$ is polar, the compact sets $\E$ and $F$ have the same Green function, in the sense that $G_F$ is the harmonic extension of $G_\E$ across the polar set $\E\bs F$. The comb construction of \cite{EY11} depends only on this Green function and recovers the closed support of the corresponding equilibrium measure, namely $F$. If $\E$ is semi-regular, equivalently, if $F$ is regular, the gap heights of $F$ are precisely the slit heights in the associated comb domain. However we note that the gap-height sum for $\E$ may be larger, since polar points of $\E\bs F$ can split a single gap of $F$ into several gaps of $\E$ without creating additional slits.
\end{remark}

\begin{theorem}\lb{UB1-thm}
Let $\E\subset\bbR$ be a compact set of positive capacity and $x_*\in\ol\bbR\bs\E$.  Then, for all $n\ge1$,
\begin{align}\lb{UB1}
    \W_n(\E,x_*)\le 2\exp\bigl[\cc H(\E,x_*)\bigr].
\end{align}
\end{theorem}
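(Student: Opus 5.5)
We bound the Widom factor through the Green-function difference $\De_{\E,n}$ and the representation \eqref{De-rep}. We treat $x_*=\infty$ (via \eqref{WnDe-cheb}) and finite $x_*$ (via \eqref{WnDe-res}) in parallel.

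\textbf{Step 1: the bound on $d_n\De_{\E,n}(x_*)$.} Since $\E_n\bs\E$ lies in the union of the gaps of $\E$, we write
\[
    d_n\De_{\E,n}(x_*)=d_n\sum_{K}\int_{\E_n\cap K}G_\E(x,x_*)\,d\mu_n(x).
\]
The gap $K_0$ containing $x_*$ does not meet $\E_n$, so only gaps $K\in\cc G(\E,x_*)$ contribute. Each such gap meets at most one band, and \eqref{gap-meas} gives $\mu_n(K)\le 1/d_n$. Hence each term is at most $\sup_{x\in K}G_\E(x,x_*)$, and therefore
\[
    d_n\De_{\E,n}(x_*)\le\cc H(\E,x_*).
\]
For $x_*=\infty$ we have $d_n=n$, and \eqref{WnDe-cheb} gives \eqref{UB1} immediately.

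\textbf{Step 2: the residual case.} For finite $x_*$ we must also control the extra factors in \eqref{WnDe-res}:
\begin{itemize}
\item the term $(n-d_n)G_\E(x_*)$, which matters when $d_n=n-1$;
\item the fraction $\frac{1+e^{-2nG_\E(x_*)}}{1+e^{-2d_nG_{\E_n}(x_*)}}$.
\end{itemize}
Rather than estimate these separately, I would return to the exact formula \eqref{WnEn-res},
\[
    \W_n(\E,x_*)=2\frac{\cosh(nG_\E(x_*))}{\cosh(d_nG_{\E_n}(x_*))}.
\]
This reduces the claim to the single inequality
\[
    nG_\E(x_*)\le d_nG_{\E_n}(x_*)+\cc H(\E,x_*).
\]
Granting it, the bound follows because $\cosh(a+h)\le e^h\cosh a$ for $a,h\ge0$ (here $a=d_nG_{\E_n}(x_*)$ and $h=\cc H$) and $\cosh$ is increasing on $[0,\infty)$.

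\textbf{Step 3: the case $d_n=n-1$.} When $d_n=n$ the needed inequality is exactly Step 1. When $d_n=n-1$, the exterior gap $K_\infty$ is disjoint from $\E_n$, and we need the extra
\[
    G_\E(x_*)\le\sup_{x\in K_\infty}G_\E(x,x_*)+\bigl(\text{unused room in the Step~1 bound}\bigr).
\]
By symmetry $G_\E(x_*)=G_\E(\infty,x_*)$. Since $x_*$ is finite, $K_\infty\ne K_0$, so $K_\infty\in\cc G(\E,x_*)$ and $\sup_{K_\infty}G_\E(\cdot,x_*)\ge G_\E(\infty,x_*)$. Thus the exterior gap's term in $\cc H$ is available, and in Step 1 it was not used because $\mu_n(K_\infty)=0$. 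This yields
\[
    (n-1)\De_{\E,n}(x_*)+G_\E(x_*)\le\cc H(\E,x_*),
\]
which is the needed inequality with $d_n=n-1$. (The degenerate case $d_n=0$, $n=1$, gives $T_{1,x_*}\equiv1$, and the same comparison goes through directly from the definition.)

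\textbf{Main obstacle.} The main obstacle is this exterior-gap bookkeeping in the residual case. If it fails, the fallback is to bound $(n-d_n)G_\E(x_*)$ by the exterior-gap term and to check the $\cosh$-ratio inequality directly.
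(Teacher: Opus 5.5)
Your proof is correct and follows the paper's argument: the gap-by-gap bound $d_n\De_{\E,n}(x_*)\le\sum_{K\cap\E_n\neq\emptyset}\sup_K G_\E(\cdot,x_*)$ via \eqref{gap-meas}, with the unused exterior-gap term absorbing $G_\E(x_*)=G_\E(\infty,x_*)$ when $d_n=n-1$. Your $\cosh(a+h)\le e^h\cosh a$ step is only a repackaging of the paper's observation that the fraction in \eqref{WnDe-res} is at most $1$.

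One justification needs fixing. $K_\infty\neq K_0$ does not follow from $x_*$ being finite, since a finite $x_*$ outside $\cvh(\E)$ lies in the exterior gap. It follows instead from the fact that $d_n=n$ for every $x_*$ in the exterior gap, so $d_n\in\{0,n-1\}$ forces $x_*\notin K_\infty$.
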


\begin{proof}
If $d_n=0$, then necessarily $n=1$, $T_{1,x_*}\equiv1$, and $x_*$ is not in the exterior gap.  Hence the exterior gap $K_\infty$ belongs to $\cc G(\E,x_*)$, and
\[
    \W_1(\E,x_*)=e^{G_\E(x_*)}+e^{-G_\E(x_*)}
    \le 2e^{G_\E(x_*)}
    =2e^{G_\E(\infty,x_*)}
    \le 2\exp\bigl[\cc H(\E,x_*)\bigr].
\]
Next assume $d_n\ge1$. Using the second equality in \eqref{De-rep} and \eqref{gap-meas}, we get
\begin{align}\lb{De-gap-est}
    \De_{\E,n}(x_*)
    =
    \int_{\E_n\bs\E}G_\E(x,x_*)\,d\mu_n(x)
    \le
    \frac1{d_n}\sum_{\substack{K\in\cc G(\E,x_*)\\ K\cap\E_n\ne\emptyset}}
    \sup_{x\in K}G_\E(x,x_*).
\end{align}

If $x_*=\infty$, then $d_n=n$, and so \eqref{UB1} follows from \eqref{WnDe-cheb} and \eqref{De-gap-est}.

It remains to consider the case $x_*\in\bbR\bs\E$. Since $\E\subset\E_n$, monotonicity of the Green function with respect to the underlying set gives $G_\E(x_*)\ge G_{\E_n}(x_*)$. It then follows from \eqref{WnDe-res} that
\[
    \W_n(\E,x_*)
    \le
    2\exp\big[d_n\De_{\E,n}(x_*)+(n-d_n)G_\E(x_*)\big].
\]
If $d_n=n$, then this is
\[
    \W_n(\E,x_*)\le 2\exp[n\De_{\E,n}(x_*)],
\]
and hence \eqref{UB1} follows from \eqref{De-gap-est}. If $d_n=n-1$, then
\[
    \W_n(\E,x_*)\le
    2\exp[d_n\De_{\E,n}(x_*)+G_\E(x_*)].
\]
In this case, by the properties of $\E_n$ recalled in Section~\ref{Sec2.3}, the exterior gap $K_\infty$ of $\E$ does not meet $\E_n$. Since $\infty\in K_\infty$, we have
\[
    G_\E(x_*) = G_\E(\infty,x_*) \le \sup_{x\in K_\infty}G_\E(x,x_*).
\]
Combining these estimates with \eqref{De-gap-est} gives
\[
    \log\frac{\W_n(\E,x_*)}{2}
    \le
    \sum_{K\in\cc G(\E,x_*)}\sup_{x\in K}G_\E(x,x_*),
\]
which proves \eqref{UB1}.
\end{proof}

Next we show that outside the semi-regular class the height constant is always infinite.

\begin{theorem}
Let $\E\subset\bbR$ be a compact non-polar set. If $\E$ is not semi-regular, then
$\cc H(\E,x_*)=\infty$ for every $x_*\in\ol\bbR\bs\E$.
\end{theorem}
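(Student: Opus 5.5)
If $\E$ is not semi-regular, then $\E^\reg$ is not closed. So there is a point $y\in\overline{\E^\reg}\bs\E^\reg$. Since $\E$ is closed, $y\in\E^\ir$, and hence $G_\E(y,x_*)=\delta>0$. We may also pick regular points $y_j\in\E^\reg$ with $y_j\to y$. The plan is to produce infinitely many gaps of $\E$, not containing $x_*$, each of which carries a point where $G_\E(\cdot,x_*)$ exceeds some fixed constant, say $\delta/2$. Then $\cc H(\E,x_*)=\infty$ follows immediately from \eqref{g-height}.

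The first step uses the definition of $G_\E(y,x_*)$ as an upper boundary limit. Since $G_\E(y,x_*)=\limsup_{\Om_\E\ni z\to y}G_\E(z,x_*)=\delta$, every disk $D(y,r)$ contains points $z\in\Om_\E$ with $G_\E(z,x_*)>\delta/2$. These points may be nonreal, so we need to transfer the estimate to real points in gaps. The tool is the symmetry of $G_\E(\cdot,x_*)$ under conjugation, together with the fact that $G_\E(\cdot,x_*)$ is subharmonic on $\bbC\bs\{x_*\}$. Indeed, $G_\E=\max(G_\E,0)$ extended is subharmonic on $\bbC\bs\{x_*\}$, since its upper semicontinuous extension to $\E$ satisfies the sub-mean value property. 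I would use the maximum principle on a small rectangle or disk around $y$ that avoids $x_*$. If $G_\E(z,x_*)>\delta/2$ at some $z$ near $y$, then subharmonicity forces large values on the real segment. A cleaner route is to note that $G_\E(\cdot,x_*)$ is harmonic in the half-disk $D(y,r)\cap\bbC_+$, with boundary values $\le \sup_{D(y,r)\cap\bbR}G_\E$ on the real diameter and $\le M$ on the arc. Since $G_\E$ vanishes at regular points, this alone does not trivially produce large real values. So I would rather argue directly: $\sup_{x\in(y-r,y+r)}G_\E(x,x_*)\ge\delta$, because $y$ itself lies in that interval and the boundary value at $y$ is $\delta$. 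The difficulty is then to find gaps, not points of $\E$, where $G_\E$ is large.

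The key step is the following. Near $y$ there are infinitely many gaps of $\E$ accumulating at $y$; this holds because $\E^\ir$ is polar, so $\E$ contains no interval consisting of irregular points, while $y$ is a limit of regular points. Fix a small $r$. I claim that some gap $K\subset(y-r,y+r)$ satisfies $\sup_K G_\E\ge\delta/2$. Consider the values $G_\E(x,x_*)$ for $x\in\E\cap(y-r,y+r)$, which are limsups of values from $\Om_\E$, including real gap points. By upper semicontinuity and the definition, $\delta=\limsup_{z\to y}G_\E(z,x_*)$. I would show that this limsup is attained along real $z$ in gaps. Suppose instead that $G_\E\le\delta/2$ on $\bbR\bs\E$ near $y$. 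The function $G_\E$ also vanishes on $\E^\reg$ and is bounded by $M$ elsewhere on $\E$. Then on $D(y,r)\bs\bbR$ it is harmonic, with boundary values $\le\delta/2$ on $\bbR\cap D$ except on the polar set $\E^\ir$, and $\le M$ on the circle. The extended maximum principle, which allows polar exceptional sets for bounded harmonic functions, then gives $G_\E(z)\le\delta/2+M\omega(z)$, where $\omega$ is the harmonic measure of the arc. As $z\to y$ we have $\omega(z)\to0$, so $\limsup_{z\to y}G_\E\le\delta/2<\delta$, a contradiction. I expect this step to be the main obstacle: it needs bounded-harmonic maximum principles with polar exceptions on each half-disk, and care that $x_*\notin D$.

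Finally, we iterate. Having found one gap $K_1$ in $(y-r_1,y+r_1)$, choose $r_2<\dist(y,K_1)$; this is positive since $y\in\E$ and $K_1$ is a gap whose closure may contain $y$ only as an endpoint. If $y$ is an endpoint of $K_1$, we instead restrict to the side of $y$ where regular points accumulate. Repeating gives infinitely many distinct gaps $K_j$. For $r$ small these gaps avoid the gap containing $x_*$, so each lies in $\cc G(\E,x_*)$ and contributes at least $\delta/2$ to \eqref{g-height}. Hence $\cc H(\E,x_*)=\infty$.
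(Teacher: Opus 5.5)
\textbf{Verdict: your proposal has a genuine gap.} Your maximum-principle step is correct as far as it goes. Take $r$ small, so that $x_*\notin\ol{D(y,r)}$, and suppose every gap point of $(y-r,y+r)$ had $G_\E(\cdot,x_*)\le\delta/2$. The extended maximum principle on the two half-disks then gives $\limsup_{z\to y}G_\E(z,x_*)\le\delta/2$; the exceptional set is $\E^\ir\cap[y-r,y+r]$ together with $y\pm r$, which is Borel ($F_\sigma$) and polar. So gap points with $G_\E>\delta/2$ exist arbitrarily close to $y$. The argument gives a gap meeting $(y-r,y+r)$ rather than one contained in it, but that is all the iteration needs. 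If $y$ is not an endpoint of any gap of $\E$, every gap has positive distance from $y$. Your iteration then produces infinitely many distinct gaps of height $>\delta/2$, none of them the gap of $x_*$.

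\textbf{The gap.} The problem is the remaining case, when $y$ is an endpoint of a gap $K_0$, say $K_0=(c,y)$ with regular points accumulating at $y$ from the right. This case does occur. Take $x_*=\infty$ and $\E=\{0\}\cup\bigcup_{k\ge1}[2^{-k},2^{-k}+e^{-4^k}]$. By Wiener's criterion $0$ is irregular, it is the only point of $\ol{\E^\reg}\bs\E^\reg$, and its adjacent gap is the exterior gap, which is not even counted in $\cc H(\E,\infty)$. Here your disk argument says nothing about which side of $y$ the large values lie on, and $K_0$ alone always supplies them. A segment is non-thin at its endpoint, and the extended $G_\E(\cdot,x_*)$ is subharmonic near $y$, so $\limsup_{x\to y^-}G_\E(x,x_*)=\delta$. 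Hence every point your argument produces may lie in $K_0$, and the iteration stalls. Your sentence ``restrict to the side of $y$ where regular points accumulate'' is exactly the claim that needs proof. The half-disk argument cannot supply it, because its contradiction hypothesis requires small values on both sides of $y$.

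\textbf{How to repair it.} Run the same argument on the slit disk $D(y,r)\bs[y,y+r)$, which lies in $\Om_\E\bs\{x_*\}$ once $(y-r,y)\subset K_0$. Suppose all gap points of $(y,y+r)$ had $G_\E\le\delta/2$. Then $G_\E\le\delta/2+M\omega$ on the slit disk, where $\omega$ is the harmonic measure of the circle, with exceptional set $\E^\ir\cup\{y+r\}$. Since $y$ is a regular boundary point of the slit disk, $\omega(z)\to0$ as $z\to y$, which contradicts $G_\E(y,x_*)=\delta$. Alternatively, $(y,y+r)\bs\E$ is non-thin at $y$ because $\E$ is thin there. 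In this case no gap has $y$ as its left endpoint, so the gaps found in $(y,y+r)$ have positive distance from $y$ and the iteration goes through.

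\textbf{Comparison with the paper.} With this repair your route is genuinely different from the paper's. The paper argues by contraposition and globally. It fills all but finitely many bounded gaps, each filled gap having height $<\eps$, to obtain a finite union of intervals $F_\eps$ with $0\le G_\E-G_{F_\eps}\le\eps$. Any point of $\ol{\E^\reg}$ is non-isolated in $F_\eps$, so it lies in a nondegenerate interval of $F_\eps$, and hence $\limsup G_\E\le\eps$ there. One-sided accumulation is thus handled automatically by the regularity of interval endpoints. Your local argument needs the case distinction above. In return, it yields the more precise conclusion that gaps of height $>\delta/2$ accumulate at every point $y$ of $\ol{\E^\reg}\bs\E^\reg$, where $\delta=G_\E(y,x_*)$.
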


\begin{proof}
We prove the contrapositive. Suppose that $\cc H(\E,x_*)<\infty$ for some $x_*\in\ol\bbR\bs\E$. We will show that $\E^\reg$ is closed.
If $\E$ has only finitely many gaps, then it is a union of finitely many closed possibly degenerate intervals. In this case $\E^\reg$ is the finite union of nondegenerate closed intervals, and hence is automatically closed. Thus, it suffices to assume that $\E$ has infinitely many gaps.

Let $r\in\ol{\E^\reg}$. Since $\E$ is compact, $r\in\E$.
Let $\cc G_b(\E,x_*)$ denote the bounded gaps in $\cc G(\E,x_*)$. Fix $\eps>0$. By the finiteness assumption $\cc H(\E,x_*)<\infty$, choose a finite collection $\cc F\subset\cc G_b(\E,x_*)$ such that
\begin{align}\label{ccF-eps}
    \sum_{K\in\cc G_b(\E,x_*)\bs\cc F}\sup_{x\in K}G_\E(x,x_*)<\eps.
\end{align}
Fill all bounded gaps except those in $\cc F$ and except the gap containing $x_*$,
\begin{align}\label{F-eps}
    F_\eps=\E\cup\bigcup_{K\in\cc G_b(\E,x_*)\bs\cc F}K.
\end{align}
Then $F_\eps$ is compact, $x_*\notin F_\eps$, and $F_\eps$ has only finitely many bounded gaps.
Since $F_\eps\supset\E$, monotonicity gives $G_{F_\eps}(z,x_*)\le G_\E(z,x_*)$. The difference
\[
u=G_\E(\cdot,x_*)-G_{F_\eps}(\cdot,x_*)
\]
has no pole, because the singularities cancel, and is harmonic and bounded above on $\ol\bbC\bs F_\eps$. 
On $\E$, its boundary values are zero quasi-everywhere. On each filled gap $K\in\cc G_b(\E,x_*)\bs\cc F$, we have $G_{F_\eps}=0$ quasi-everywhere by \eqref{F-eps} and $G_\E(\cdot,x_*)<\eps$ by \eqref{ccF-eps}.
Hence the boundary values of $u$ on $F_\eps$ are bounded above by $\eps$ quasi-everywhere. Then, by the maximum principle, for all $z$,
\begin{align}\lb{GF-comp}
    0\le G_\E(z,x_*)-G_{F_\eps}(z,x_*)\le\eps.
\end{align}

Since $r\in\ol{\E^\reg}$ and $\E^\reg\subset F_\eps$, the point $r$ is not isolated in $F_\eps$. Since $F_\eps$ has only finitely many bounded gaps, it is a finite union of closed intervals, allowing degenerate intervals. Hence $r$ belongs to a non-degenerate interval $I\subset F_\eps$. By monotonicity, $0\le G_{F_\eps}(z,x_*)\le G_I(z,x_*)$. The Green function of an interval is continuous up to the interval and vanishes there, so $G_{F_\eps}(z,x_*)\to0$ as $z\to r$.

From \eqref{GF-comp}, $\limsup_{z\to r}G_\E(z,x_*)\le\eps$. Since $\eps$ was arbitrary, the upper boundary value of $G_\E(\cdot,x_*)$ at $r$ is zero. Thus $r\in\E^\reg$. Therefore $\ol{\E^\reg}\subset\E^\reg$, so $\E$ is semi-regular. This proves the contrapositive.
\end{proof}

For semi-regular sets, the gap heights in Theorem~\ref{UB1-thm} can be estimated in terms of the Parreau--Widom constant and the irregularity coefficient.

\begin{theorem}\lb{UB2-thm}
Let $\E\subset\bbR$ be a compact semi-regular set and let $x_*\in\ol\bbR\bs\E$.  Then $\cc H(\E,x_*)\le \PW(\E,x_*)+\IR(\E,x_*)$, and hence, for all $n\ge1$,
\begin{align}\lb{UB2}
    \W_n(\E,x_*)\le 2\exp\big[\PW(\E,x_*)+\IR(\E,x_*)\big].
\end{align}
\end{theorem}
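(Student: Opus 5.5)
We prove the gap-height inequality $\cc H(\E,x_*)\le\PW(\E,x_*)+\IR(\E,x_*)$; the Widom factor bound \eqref{UB2} then follows at once from Theorem~\ref{UB1-thm}. Throughout we use that $G_\E(\cdot,x_*)=G_{\E^\reg}(\cdot,x_*)$ on $\ol\bbC\bs\{x_*\}$ and that this function is continuous, as recalled in Section~\ref{Sec2.1}. The idea is to organize the gaps of $\E$ according to the gaps of $\E^\reg$ that contain them. Since $\E^\reg\subset\E$, every gap $K$ of $\E$ lies in a unique gap $L$ of $\E^\reg$. If $x_*\notin L$, then $L$ contains exactly one critical point $c_L$ of $G_{\E^\reg}(\cdot,x_*)$. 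If $x_*\in L$, there is no critical point in $L$.

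Fix a gap $L$ of $\E^\reg$. The set $L\cap\E$ consists of irregular points, hence is countable (it is contained in the polar set $\E^\ir$, and we may assume $\IR<\infty$, which forces $\E^\ir$ to be countable). These points split $L$ into gaps of $\E$. On $L$ (the real interval, or the arc through $\infty$), the function $g=G_\E(\cdot,x_*)$ is continuous. It vanishes at the endpoints of $L$, which lie in $\E^\reg$. We claim $g$ is unimodal on $L$: if $x_*\notin L$, it increases up to $c_L$ and then decreases. This follows because there is exactly one critical point in $L$ and $g$ is real-analytic there; more precisely, $g$ is real-analytic on $L$ except at $x_*$, with $g\to0$ at the ends. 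If $x_*\in L$, then $g$ increases toward the pole on each side of it.

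The key estimate, which I expect to be the main step, is a "telescoping" bound. Suppose $x_*\notin L$ and the gaps of $\E$ inside $L$ are $K_j$. For each $K_j$ not containing $c_L$, $g$ is monotone on $K_j$, so $\sup_{K_j}g$ is attained at the endpoint of $K_j$ nearer to $c_L$. That endpoint is an irregular point $x$ of $\E$ in $L$. The assignment $K_j\mapsto x$ is injective, since each irregular point bounds at most one gap on its side toward $c_L$. The gap containing $c_L$ (or, if $c_L\in\E$, no gap, with $c_L$ itself then being an irregular point) has supremum $g(c_L)$. Summing over $j$ gives
\[
\sum_{K\subset L}\sup_K g\le g(c_L)+\sum_{x\in\E^\ir\cap L}g(x).
\]
If $x_*\in L$, the gap of $\E$ containing $x_*$ is excluded from $\cc G(\E,x_*)$. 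On every other gap in $L$, $g$ is monotone increasing toward $x_*$, so its supremum is attained at an irregular endpoint, and again the assignment is injective. This gives the bound by $\sum_{x\in\E^\ir\cap L}g(x)$. Care is needed for gaps accumulating at irregular points and for the case $L$ containing $\infty$; continuity of $g$ handles the suprema over open gaps.

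Finally, we sum over all gaps $L$ of $\E^\reg$. Every irregular point lies in exactly one $L$, because $\E^\reg$ is closed and irregular points are not in it. Each critical point belongs to exactly one $L$. Therefore the total is at most
\[
\sum_{c\in\cc C(\E^\reg,x_*)}G_{\E^\reg}(c,x_*)+\sum_{x\in\E^\ir}G_\E(x,x_*)=\PW(\E,x_*)+\IR(\E,x_*),
\]
which is the claimed inequality; this also covers the case where the right side is infinite. Combining this with \eqref{UB1} yields \eqref{UB2}. The most delicate point is justifying the unimodality of $g$ on each gap of $\E^\reg$, with the correct single maximum at $c_L$. I would derive it from the standard fact cited from \cite{CSZ1,CSZ5} that each gap of the regular set $\E^\reg$ not containing $x_*$ carries exactly one critical point, together with the boundary values being zero.
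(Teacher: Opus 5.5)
Your argument is correct and is essentially the paper's proof: you group the gaps of $\E$ inside each gap $L$ of $\E^\reg$, and you use that $G_\E(\cdot,x_*)=G_{\E^\reg}(\cdot,x_*)$ is unimodal on $L$ (or monotone toward $x_*$ when $x_*\in L$), so each supremum is either the critical value or the value at an irregular endpoint, which you then count. The one imprecision is your injectivity claim when $c_L$ is itself an irregular point bounding a gap of $\E$ on each side, since then both adjacent gaps select $c_L$; your displayed bound still holds because the otherwise unused term $g(c_L)$ pays for the second copy, which is exactly how the paper handles this exceptional case.
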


\begin{proof}
Since $\E$ is semi-regular, $G_\E(\cdot,x_*)=G_{\E^\reg}(\cdot,x_*)$ and this function is continuous away from the pole. Let $K$ be a gap of $\E$ which does not contain $x_*$, and let $L$ be the gap of $\E^\reg$ containing $K$.

If $L$ does not contain $x_*$, then $G_{\E^\reg}(\cdot,x_*)$ has a unique critical point in $L$, where it attains its maximum over $L$, and is monotone on the two sides of this point. Thus the supremum over $K$ is either the corresponding critical value, if the critical point lies in $\ol K$, or else it is the boundary value $G_\E(p,x_*)$ at an endpoint $p$ of $K$. In the latter case $p\in\E^\ir$, since the endpoint selected by monotonicity lies in $L$, while $L\cap\E^\reg=\emptyset$.

If $L$ contains $x_*$, then there is no critical point in $L\bs\{x_*\}$, and the same monotonicity on each component of $L\bs\{x_*\}$ shows that the supremum over every counted gap $K\subset L$ is again attained at an irregular endpoint.

Inside a fixed gap $L$ of $\E^\reg$, monotonicity on the two sides of the critical point shows that an irregular endpoint can be selected by at most one adjacent gap, unless the endpoint is itself the critical point. In that exceptional case the two adjacent gaps may both contribute the same value, but one copy is accounted for by the irregularity sum and the other by the Parreau--Widom critical value. If $L$ contains $x_*$, the gap containing $x_*$ is not counted and there is no critical point, so each irregular endpoint is selected at most once.

Summing over all gaps of $\E$ which do not contain $x_*$, the critical-value contributions are bounded by $\PW(\E,x_*)$ and the irregular-endpoint contributions are bounded by $\IR(\E,x_*)$. Hence $\cc H(\E,x_*)\le \PW(\E,x_*)+\IR(\E,x_*)$, and \eqref{UB2} follows from Theorem~\ref{UB1-thm}.
\end{proof}

\section{Asymptotic lower bound}\lb{Sec4}

In this section we prove lower bounds. The first one holds for arbitrary compact non-polar sets and involves only the irregular points separated from the regular part. For semi-regular sets this gives a lower bound complementary to \eqref{UB2}.

We first recall a standard root-asymptotic fact.

\begin{lemma}\label{root-lem}
Let $\E$ be a compact non-polar set and $x_*\in\ol\bbR\bs\E$. For each $n$, let $P_n$ be a polynomial of degree $m_n\le n$ with $m_n/n\to1$. Assume that the polynomials are normalized at $x_*$ in the same way as the extremal polynomials, that is, if $x_*=\infty$, then $P_n$ is monic of degree $m_n$, while if $x_*\in\bbR\bs\E$, then $P_n(x_*)=1$. Suppose that
\begin{align}\label{asymp-extr}
    \lim_{n\to\infty}\|P_n\|_\E^{1/n}=\ca(\E,x_*)
\end{align}
and suppose further that there exists a connected open set $U\subset\Omega_\E$ containing $x_*$ such that no $P_n$ has a zero in $U$. Then, locally uniformly on $U\bs\{\infty\}$,
\begin{align}\label{root-asymp}
    |P_n(z)|^{1/n}\to \ca(\E,x_*)e^{G_\E(z)}.
\end{align}
\end{lemma}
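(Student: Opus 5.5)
The natural approach is the same Harnack argument used in the proof of Theorem~\ref{Wn-ind-thm}, applied to nonnegative harmonic functions built from $\log|P_n|$ and the Bernstein--Walsh inequality \eqref{BW-ineq}. We treat finite $x_*$ first and then indicate the changes for $x_*=\infty$.

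\emph{Upper bound.} By \eqref{BW-ineq} with $\deg P_n=m_n\le n$,
\[
    \frac1n\log|P_n(z)|\le \frac1n\log\|P_n\|_\E+G_\E(z),
\]
so by \eqref{asymp-extr}
\[
    \limsup_n|P_n(z)|^{1/n}\le\ca(\E,x_*)e^{G_\E(z)}
\]
uniformly on $\bbC$. This bound needs neither the zero-free assumption nor the condition on $m_n$.

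\emph{Lower bound.} For $z\in U\bs\{\infty\}$ we set
\[
    u_n(z)=\log\|P_n\|_\E+m_nG_\E(z)-\log|P_n(z)|.
\]
By \eqref{BW-ineq} this function is nonnegative, and it is harmonic on $U\bs\{\infty\}$ since $P_n$ has no zeros in $U$. When $x_*\in\bbR\bs\E$ we have $P_n(x_*)=1$, so
\[
    u_n(x_*)=\log\|P_n\|_\E+m_nG_\E(x_*).
\]
Dividing by $n$ and using \eqref{asymp-extr}, \eqref{Cap-res} and $m_n/n\to1$ gives
\[
    u_n(x_*)/n\to -G_\E(x_*)+G_\E(x_*)=0.
\]
Fix a compact $K\subset U\bs\{\infty\}$ and enlarge it to a compact connected set in $U\bs\{\infty\}$ containing $x_*$; this is possible because $U$ minus a point is still connected. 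Harnack's inequality then yields a constant $C_K$ with $\sup_K u_n\le C_Ku_n(x_*)$, hence $\sup_K u_n/n\to0$. Consequently
\[
    \frac1n\log|P_n(z)|=\frac1n\log\|P_n\|_\E+\frac{m_n}{n}G_\E(z)-\frac{u_n(z)}n\to\log\ca(\E,x_*)+G_\E(z)
\]
uniformly on $K$. Here $G_\E$ is bounded on $K$, so the factor $m_n/n\to1$ causes no trouble. This also supersedes the separate upper bound.

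\emph{The case $x_*=\infty$.} Here $P_n$ is monic, and the function
\[
    u_n(z)=\log\|P_n\|_\E+m_nG_\E(z)-\log|P_n(z)|
\]
extends harmonically to $\infty$. Using \eqref{Cap-cheb},
\[
    u_n(\infty)=\log\|P_n\|_\E-m_n\log\ca(\E),
\]
so $u_n(\infty)/n\to0$ by \eqref{asymp-extr} and $m_n/n\to1$. We apply Harnack on $U$ itself, which contains $\infty$, with compact sets $K\subset U\bs\{\infty\}$ joined to $\infty$ inside $U$. The conclusion is now $|P_n(z)|^{1/n}\to\ca(\E)e^{G_\E(z)}$, which is \eqref{root-asymp} with $\ca(\E,\infty)=\ca(\E)$.

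The main obstacle is bookkeeping rather than anything deep. In the case $x_*=\infty$ one must check that the harmonic extension of $u_n$ at $\infty$ is legitimate and that $\deg P_n=m_n$ matches the coefficient of $G_\E$, so that the logarithmic singularities cancel. One must also keep the Harnack constant independent of $n$, which holds because $U$ and $K$ are fixed.
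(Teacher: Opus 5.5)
Your proposal is correct and follows essentially the same argument as the paper. Your $u_n$ is $n$ times the paper's $h_n$: nonnegativity comes from Bernstein--Walsh, harmonicity from the zero-free assumption, $u_n(x_*)/n\to0$ from the normalization together with $m_n/n\to1$, and Harnack's inequality then gives locally uniform convergence. Your separate upper bound and the enlargement of $K$ to a connected set containing $x_*$ are harmless but unnecessary, since Harnack's inequality on the domain already controls every compact subset.
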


\begin{proof}
Define
\[
    h_n(z)=
    \log \|P_n\|_\E^{1/n}
    +\frac{m_n}{n}G_\E(z)
    -\frac1n\log|P_n(z)|.
\]
By Bernstein--Walsh, $h_n\ge0$ on $U$, and since $P_n$ has no zeros on $U$, the functions $h_n$ are harmonic there.

If $x_*=\infty$, then the singularity at infinity is removable and
\[
    h_n(\infty)
    =
    \log \|P_n\|_\E^{1/n}
    -\frac{m_n}{n}\log\ca(\E)
    \to0.
\]
If $x_*\in\bbR\bs\E$, then $P_n(x_*)=1$ and $\log\ca(\E,x_*)=-G_\E(x_*)$, so
\[
    h_n(x_*)
    =
    \log\frac{\|P_n\|_\E^{1/n}}{\ca(\E,x_*)}
    +\left(\frac{m_n}{n}-1\right)G_\E(x_*)
    \to0.
\]
Harnack's inequality gives $h_n\to0$ locally uniformly on $U$.  Therefore
\[
    \frac1n\log|P_n(z)|
    =
    \log \|P_n\|_\E^{1/n}
    +\frac{m_n}{n}G_\E(z)-h_n(z)
    \to
    \log\ca(\E,x_*)+G_\E(z),
\]
which proves \eqref{root-asymp}.
\end{proof}

Next we prove a technical lemma about the sets $\E_n$ introduced in Section~\ref{Sec2.3}.

\begin{lemma}\label{shrink-lem}
Let $\E\subset\bbR$ be a compact non-polar set and $x_*\in\ol\bbR\bs\E$. Suppose $r$ is an isolated point of $\E$.
For each $n\ge2$, let $\E_{n,r}$ denote the component of $\E_n$ containing $r$. Then
\begin{align}\label{shrink}
    \lim_{n\to\infty}\diam(\E_{n,r})=0.
\end{align}
\end{lemma}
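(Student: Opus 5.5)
Because $r$ is isolated in $\E$, we can fix $\delta>0$ with $\E\cap[r-\delta,r+\delta]=\{r\}$. Both $(r-\delta,r)$ and $(r,r+\delta)$ then lie in gaps of $\E$, say $K^-$ and $K^+$. By Section~\ref{Sec2.3}, each of these gaps meets at most one band of $\E_n$. The component $\E_{n,r}$ is a union of consecutive bands, and any band contained in it other than the one through $r$ would have to sit inside $K^-\cup K^+$ near $r$. So $\E_{n,r}$ consists of at most two or three bands: the band $B_n$ containing $r$, plus possibly one band from each adjacent gap that abuts it. Every band has $\mu_n$-measure $1/d_n$, so
\[
\mu_n(\E_{n,r})\le \frac{3}{d_n}\le\frac{3}{n-1}\to0 .
\]
It therefore suffices to show that a subinterval of $\E_n$ of length at least $\eta>0$ near $r$ must carry $\mu_n$-mass bounded below, uniformly in $n$. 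Note that we may also assume $\E_{n,r}\subset(r-\delta,r+\delta)$ eventually, or at least cap its length by that of the band considerations.

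For the lower bound on equilibrium mass, I would argue by contradiction. Suppose that along a subsequence $\E_{n,r}$ contains an interval $J_n$ of length $\ge\eta$ with endpoint at $r$, say $J_n=[r,r+\eta]$ after passing to a further subsequence and shrinking $\eta$. Since all $\E_n\subset\cvh(\E)\cup(\text{one band near }x_*\text{'s side})$ are contained in a fixed bounded set (bands lie in $\cvh(\E)$, except possibly one band in the exterior gap, which is controlled by the degree count), the monotonicity of equilibrium measures under restriction gives the following. For any compact $F\supset\E_n$ lying in a fixed large interval $[-R,R]$, the balayage comparison yields $\mu_n|_{J}\ge \mu_{[-R,R]}$-type lower bounds is false in general. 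Instead, I would use the standard estimate for a set containing an interval: for $E'\subset[-R,R]$ with $J\subset E'$, the density of $\mu_{E'}$ on $J$ dominates the density of the equilibrium measure of $J$ relative to $\Omega_{[-R,R]}$. Concretely, $\mu_{E'}(J')\ge \omega_{[-R,R]\setminus}\!\ldots$. I expect this step to be the main obstacle.

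A cleaner route avoids measures entirely and uses the Widom-factor root asymptotics. On $J_n\subset\E_n$ we have $|T_{n,x_*}|\le\|T_{n,x_*}\|_\E$, while $J_n\setminus\{r\}\subset\Omega_\E$. I would apply Lemma~\ref{root-lem} to $P_n=T_{n,x_*}$ with the zero in $K^\pm$ removed, as in the proof of Theorem~\ref{Wn-ind-thm}. The relevant normalization is $\|P_n\|_\E^{1/n}\to\ca(\E,x_*)$, which holds because dividing by a factor bounded above and below on $\E$ does not change root asymptotics. For the open set $U$, take a disk around the midpoint of $[r+\eta/3,r+2\eta/3]$, where $P_n$ has no zeros, joined through the upper half plane to $x_*$. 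Then $|P_n(z)|^{1/n}\to\ca(\E,x_*)e^{G_\E(z)}$ uniformly on that midpoint segment. On the other hand, $|P_n|\le C\|T_{n,x_*}\|_\E$ there, since the removed linear factor is bounded below on the segment away from its endpoints, or can be bounded after shrinking. This gives $G_\E\le0$ on a segment inside a gap of $\E$, contradicting $G_\E>0$ in $\Omega_\E$. The delicate point is keeping the removed zero away from the chosen segment. Since there is at most one zero per gap, I can always choose one of the three thirds of $[r,r+\eta]$ free of it, and that handles this. Hence $\diam(\E_{n,r})\to0$.
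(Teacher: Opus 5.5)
Discard your first approach. The claim that $\E_{n,r}$ consists of at most three bands presupposes that $\E_{n,r}$ stays near $r$, which is exactly what must be proved, and the lower bound on $\mu_n$-mass is never established. The argument that actually proves the lemma is your ``cleaner route'': if $[r,r+\eta]\subset\E_n$ along a subsequence, then $|T_{n,x_*}|\le\|T_{n,x_*}\|_\E$ on a fixed segment inside the gap $K^+$. Lemma~\ref{root-lem} forces growth like $(\ca(\E,x_*)e^{G_\E})^n$ there. The idea is sound, but two steps fail as written.

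\textbf{The normalization step.} You cannot remove ``the zero in $K^\pm$'' and then say the linear factor is bounded above and below on $\E$. The zero $\zeta_n$ of the band through $r$ may approach $r\in\E$, or the other end of $K^+$. Then $|x-\zeta_n|$ is not bounded below on $\E$, and $\|P_n\|_\E^{1/n}\to\ca(\E,x_*)$ is unjustified. The fix is what the proof of Theorem~\ref{Wn-ind-thm} actually does: remove the zero only if it lies in a fixed compact interval at positive distance from $\E$, e.g.\ $I'=[r+\eta/4,r+3\eta/4]$ with $\eta<\delta$. This interval is also away from $x_*$, which cannot lie in $K^+$ because $K^+$ meets $\E_n$. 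Then take the disk in $U$ with real trace inside $I'$.

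\textbf{The ``three thirds'' selection.} The third $[r,r+\eta/3]$ touches $r\in\E$, so Lemma~\ref{root-lem} gives no control there. Also, a third that merely avoids $\zeta_n$ need not stay a uniform distance from it. Instead, fix one segment $S$ in the disk. For each $n$, evaluate at a point $x_n\in S$ with $|x_n-\zeta_n|\ge|S|/2$. Then $|P_n(x_n)|\le C\|T_{n,x_*}\|_\E$, and uniform convergence on $S$ gives $\min_S G_\E\le0$, the desired contradiction.

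With these repairs your proof is correct and genuinely different from the paper's. The paper removes the polar point $r$ itself, setting $\E'=\E\setminus\{r\}$, so that a whole interval $I\ni r$ lies in $\Omega_{\E'}$. It collects the at most two zeros of $T_{n,x_*}$ in $J$ into $R_n$ and uses $\|T_{n,x_*}\|_{\E'}=\|T_{n,x_*}\|_\E$, which holds because there are two extremal points. Applying Lemma~\ref{root-lem} to the cofactor on $I$ gives $|R_n|\le e^{-\eta n}$ on $\E_n\cap I$, hence the explicit rate $\diam(\E_{n,r})\le 4e^{-\eta n/2}$. Your argument stays in $\Omega_\E$ away from $r$ and needs neither the alternation argument nor the degree count for $R_n$. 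It is purely qualitative and gives no rate, but that suffices for the later uses in Theorem~\ref{ALB0-thm} and Lemma~\ref{SW-bound-lem}.
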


\begin{proof}
Choose an interval $J=[r-\delta,r+\delta]$, $\delta>0$, such that $J\cap\E=\{r\}$ and $x_*\notin J$. Let $\E'=\E\bs\{r\}$ and note that it is compact since $r$ is an isolated point of $\E$. Removing the polar point $r$ does not change the Green function or the capacity, so $G_\E=G_{\E'}$ on $\Omega_\E$ and $\ca(\E,x_*)=\ca(\E',x_*)$. Since $J\subset\Omega_{\E'}$, the function $G_{\E'}$ is continuous and positive on $J$. Pick $\eta>0$ so that $G_{\E'}\ge2\eta$ on $J$.

Let $R_n$ be the monic polynomial whose zeros are precisely the zeros of $T_{n,x_*}$ in $J$ and write $T_{n,x_*}=R_nQ_n$. By the band description in Section~\ref{Sec2.3}, each band of $\E_n$ contains one simple zero of $T_{n,x_*}$ and each gap of $\E$ meets at most one band. Since the bands are nondegenerate intervals and $J\cap\E=\{r\}$, every band meeting $J$ meets one of the two gaps adjacent to $r$. Thus at most two bands meet $J$, and hence $\deg R_n\le2$.

Set
\[
    P_n=
    \begin{cases}
        Q_n, & x_*=\infty,\\[1mm]
        Q_n/Q_n(x_*), & x_*\in\bbR\bs\E,
    \end{cases}
    \qquad
    m_n=\deg P_n.
\]
Then $n-3\le m_n\le n$. Since the zeros of $R_n$ lie in $J$ and
$\E'$ is disjoint from $J$, the factors in $R_n$ are bounded above
and below in modulus on $\E'$ by constants independent of $n$.
The same is true at $x_*$ when $x_*$ is finite. Thus, for some
constant $C>0$ independent of $n$,
\[
    C^{-1}\|T_{n,x_*}\|_\E
    =
    C^{-1}\|T_{n,x_*}\|_{\E'}
    \le
    \|P_n\|_{\E'}
    \le
    C\|T_{n,x_*}\|_\E.
\]
The first equality holds since by the alternation theorem $T_{n,x_*}$ has at least two extremal points on $\E$, that is, points where $|T_{n,x_*}|=\|T_{n,x_*}\|_\E$.
Then, the root asymptotics, $\|T_{n,x_*}\|_{\E}^{1/n}\to\ca(\E,x_*)=\ca(\E',x_*)$, implies $\|P_n\|_{\E'}^{1/n}\to\ca(\E',x_*)$.

Choose a closed interval $I\subset(r-\delta,r+\delta)$ with $r$
in its interior. All zeros of $P_n$ are real and lie outside $J$,
and $P_n$ has no zeros in the gap of $\E$ containing $x_*$. Hence
there is a fixed connected open set $U\subset\Omega_{\E'}$
containing $I$ and $x_*$ such that no $P_n$ has a zero in $U$.
Indeed, one can take the upper half-plane together with sufficiently
small neighborhoods of $I$ and of $x_*$, where in the Chebyshev
case the latter is a neighborhood of infinity.

By Lemma~\ref{root-lem}, uniformly for $x\in I$,
\[
    \frac1n\log|P_n(x)|
    \to
    \log\ca(\E',x_*)+G_{\E'}(x).
\]
In the Chebyshev case $Q_n=P_n$. In the residual case,
$Q_n(x_*)=1/R_n(x_*)$, so $|Q_n(x_*)|$ is bounded above and below
by positive constants independent of $n$. Combining the preceding
limit with the root asymptotics for $\|T_{n,x_*}\|_\E$ gives,
uniformly for $x\in I$,
\[
    \frac1n\log
    \frac{|Q_n(x)|}{\|T_{n,x_*}\|_\E}
    \to
    G_{\E'}(x).
\]
Since $G_{\E'}\ge2\eta$ on $I$, it follows that, for all sufficiently
large $n$,
$|Q_n(x)|\ge e^{\eta n}\|T_{n,x_*}\|_\E$ for every $x\in I$.

If $x\in\E_n\cap I$, then $|T_{n,x_*}(x)|\le\|T_{n,x_*}\|_\E$, and therefore $|R_n(x)|\le e^{-\eta n}$.
Since $r\in\E_n\cap I$, this also shows that $R_n$ is nonconstant for all sufficiently large $n$.
As $R_n$ is monic and $\deg R_n\le2$, every $x\in\E_n\cap I$ lies within $\rho_n=e^{-\eta n/2}$ of a zero of $R_n$.

Let $C_n$ be the component of $\E_n\cap I$ containing $r$. Then $C_n$ is contained in a connected component of the union of at most
two intervals of radius $\rho_n$ centered at the zeros of $R_n$. Every such connected component has diameter at most $4\rho_n$, and
hence
\[
    \diam(C_n)\le4e^{-\eta n/2}\to0.
\]
Since $r$ lies in the interior of $I$, the component $C_n$ does not meet the endpoints of $I$ for all sufficiently large $n$. Therefore
$C_n$ is the full component $\E_{n,r}$ of $\E_n$ containing $r$, and \eqref{shrink} follows.
\end{proof}

We now use the lemma to prove an asymptotic lower bound for arbitrary compact non-polar sets in terms of the following restricted irregularity coefficient.

\begin{align}\label{IR0-def}
    \IR_0(\E,x_*)=\sup_F\sum_{x\in F}G_\E(x,x_*),
\end{align}
where the supremum is over all finite sets $F\subset\E\bs\ol{\E^\reg}$. Thus $\IR_0(\E,x_*)\le\IR(\E,x_*)$. If $\E$ is semi-regular then $\ol{\E^\reg}=\E^\reg$ and hence $\IR_0(\E,x_*)=\IR(\E,x_*)$.

\begin{theorem}\label{ALB0-thm}
Let $\E\subset\bbR$ be a compact non-polar set and let $x_*\in\ol\bbR\bs\E$. Then
\begin{align}\label{ALB0}
    \liminf_{n\to\infty}\W_n(\E,x_*)\ge 2\exp\big[\IR_0(\E,x_*)\big].
\end{align}
\end{theorem}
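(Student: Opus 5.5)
Fix a finite set $F\subset\E\bs\ol{\E^\reg}$. It suffices to show
\[
\liminf_{n\to\infty}\W_n(\E,x_*)\ge 2\exp\Big[\sum_{r\in F}G_\E(r,x_*)\Big],
\]
and then take the supremum over $F$.

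Each $r\in F$ has positive distance from $\ol{\E^\reg}$. Since $\E\bs\ol{\E^\reg}$ is polar, hence totally disconnected, and (we expect) countable near $r$, I first want $r$ to be isolated in $\E$. This is not automatic: $r$ could be a limit of other irregular points. So I split into two cases.

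\emph{Isolated case.} If every $r\in F$ is isolated in $\E$, we use the representation \eqref{De-rep},
\[
\De_{\E,n}(x_*)=\int_{\E_n\bs\E}G_\E(x,x_*)\,d\mu_n(x).
\]
By Lemma~\ref{shrink-lem}, the component $\E_{n,r}$ shrinks to $r$. Since $r\in\E\subset\E_n$, the component is nonempty; it is a band, or contained in one. Because each band has $\mu_n$-mass $1/d_n$, the component satisfies $\mu_n(\E_{n,r})\ge 1/d_n$. Moreover $\E_{n,r}\cap\E=\{r\}$, which carries no $\mu_n$ mass.

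By continuity of $G_\E(\cdot,x_*)$ on the gaps adjacent to $r$, with upper limit $G_\E(r,x_*)$ at $r$, I need $G_\E(x,x_*)\to G_\E(r,x_*)$ for $x\in\E_{n,r}\bs\{r\}$. Here is a subtlety. $G_\E$ near an isolated point $r$ equals $G_{\E\bs\{r\}}$, which is harmonic, hence continuous, in a neighborhood of $r$. So the upper limit equals the genuine limit, and it is $>0$. Hence
\[
\int_{\E_{n,r}}G_\E\,d\mu_n\ge \frac{1}{d_n}\bigl(G_\E(r,x_*)-o(1)\bigr).
\]
For large $n$ the components $\E_{n,r}$, $r\in F$, are disjoint. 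Summing gives
\[
d_n\De_{\E,n}(x_*)\ge\sum_{r\in F}G_\E(r,x_*)-o(1).
\]
Plugging into \eqref{WnDe-cheb} or \eqref{WnDe-res-asymp}, using $(n-d_n)G_\E(x_*)\ge0$ and $d_n\le n$, yields the bound.

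\emph{Non-isolated case.} If some $r\in F$ is not isolated, I reduce to the previous case by monotonicity. Take a small closed interval $J\ni r$ with $J\cap\ol{\E^\reg}=\emptyset$ and set $K=(\E\bs J)\cup\{r\}$. I need $K$ compact, which requires choosing the endpoints of $J$ outside $\E$; this is possible since $\E\cap J$ is polar, hence has empty interior. Then $r$ is isolated in $K$, $K\subset\E$, and the Green functions of $K$ and $\E$ coincide, since the removed set is polar. Therefore $\ca(K,x_*)=\ca(\E,x_*)$ and $G_K(r,x_*)=G_\E(r,x_*)$. Upper boundary values agree because the harmonic functions agree off a polar set.

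By monotonicity $t_{n,x_*}(\E)\ge t_{n,x_*}(K)$, so $\W_n(\E,x_*)\ge\W_n(K,x_*)$. Doing this for each point of $F$ with disjoint intervals reduces to the isolated case applied to the smaller set.

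The main obstacle is the step asserting $\mu_n(\E_{n,r})\ge 1/d_n$. If $\E_{n,r}$ is a whole band this holds, but if $r$ lies in a band that also meets other points of $\E$, the claim needs checking. Lemma~\ref{shrink-lem} says the component containing $r$ has diameter tending to $0$. Since components of $\E_n$ are unions of adjacent bands, the component contains at least one full band, which gives mass $\ge1/d_n$. For large $n$ the band avoids other points of $\E$, since $r$ is isolated in the reduced set. This is the point I would verify carefully.
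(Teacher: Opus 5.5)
Your proposal is correct and follows essentially the paper's argument: pass to a subset with the same Green function and capacity in which the points of $F$ are isolated, then use Lemma~\ref{shrink-lem} and \eqref{De-rep}, noting that each component of the inverse-image set contains a full band of $\mu_n$-mass $1/d_n$, and conclude by monotonicity of the extremal norms. The only difference is that the paper avoids your case split and the choice of interval endpoints by taking $K=\ol{\E^\reg}\cup F$ from the start; this set is compact and makes every point of $F$ isolated at once.
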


\begin{proof}
If $\E=\ol{\E^\reg}$, then $\IR_0(\E,x_*)=0$ and \eqref{ALB0} follows from \eqref{LB}. Now assume that $\E\bs\ol{\E^\reg}$ is nonempty. Let $F=\{x_1,\ldots,x_m\}\subset\E\bs\ol{\E^\reg}$ and set $K=\ol{\E^\reg}\cup F$.
Then $K$ is compact and the points of $F$ are isolated in $K$.
Since $\E\bs K\subset\E^\ir$ is polar, the Green functions and the capacities for $\E$ and $K$ agree.

Let $K_n$ be the inverse-image set associated with $K$, and let $d_n$ and $\mu_n$ denote the corresponding degree and
equilibrium measure. By Lemma~\ref{shrink-lem}, the components of $K_n$ containing $x_1,\ldots,x_m$ shrink to these points and are pairwise disjoint for all sufficiently large $n$. The points $x_\ell$ are irregular in $K$, and $G_K(\cdot,x_*)$ is continuous at each of them. Thus, given $\eps>0$, these components eventually lie in pairwise disjoint neighborhoods that do not intersect $K^\reg$ and on which $G_K(\cdot,x_*)\ge G_K(x_\ell,x_*)-\eps/m$. Each component has $\mu_n$-measure at least $1/d_n$, so \eqref{De-rep} gives
\[
    d_n\De_{K,n}(x_*)
    \ge
    \sum_{\ell=1}^mG_K(x_\ell,x_*)-\eps
\]
for all sufficiently large $n$. This yields the desired lower bound \eqref{ALB0} for the set $K$. Indeed, in the Chebyshev case it follows from \eqref{WnDe-cheb} and $d_n=n$, and in the residual case from \eqref{WnDe-res-asymp} and $(n-d_n)G_K(x_*)\ge0$. Letting $\eps\downarrow0$, we obtain
\[
    \liminf_{n\to\infty}\W_n(K,x_*)
    \ge
    2\exp\bigg[\sum_{x\in F}G_K(x,x_*)\bigg].
\]
Since $K\subset\E$ and their capacities agree, monotonicity of the extremal norm gives $\W_n(\E,x_*)\ge\W_n(K,x_*)$. Also $G_K=G_\E$, so
\[
    \liminf_{n\to\infty}\W_n(\E,x_*)
    \ge
    2\exp\bigg[\sum_{x\in F}G_\E(x,x_*)\bigg].
\]
Taking the supremum over all finite sets $F\subset\E\bs\ol{\E^\reg}$ proves \eqref{ALB0}.
\end{proof}

The lower bound \eqref{ALB0} implies $\W_n(\E,x_*)\to\infty$ whenever $\IR_0(\E,x_*)=\infty$. The following corollary gives a simple condition for this in terms of accumulation of irregular points away from the regular part.

\begin{corollary}\label{accum-cor}
Let $\E\subset\bbR$ be a compact non-polar set and let $x_*\in\ol\bbR\bs\E$. If $\E^\ir$ has an accumulation point in $\E\bs\ol{\E^\reg}$, then $\W_n(\E,x_*)\to\infty$. In particular, this holds if $\E^\ir\bs\ol{\E^\reg}$ is uncountable.
\end{corollary}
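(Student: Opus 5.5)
The plan is to reduce everything to Theorem~\ref{ALB0-thm}. It suffices to show that $\IR_0(\E,x_*)=\infty$ under the hypothesis; then \eqref{ALB0} gives $\liminf_n\W_n(\E,x_*)=\infty$, i.e.\ $\W_n(\E,x_*)\to\infty$.

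Let $r\in\E\bs\ol{\E^\reg}$ be an accumulation point of $\E^\ir$. Since $\ol{\E^\reg}$ is closed and $r\notin\ol{\E^\reg}$, I fix $\delta>0$ with $[r-\delta,r+\delta]\cap\ol{\E^\reg}=\emptyset$. Then every point of $\E$ in this interval lies in $\E\bs\ol{\E^\reg}$. By hypothesis infinitely many distinct irregular points $y_1,y_2,\dots$ lie in $(r-\delta,r+\delta)$.

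The key step, and the one I expect to need care, is to show that $G_\E(\cdot,x_*)$ is bounded below by a positive constant on $\E\cap[r-\delta/2,r+\delta/2]$. Upper semicontinuity gives only upper bounds, so I would argue as follows. Since $\E^\reg\cap[r-\delta,r+\delta]=\emptyset$, no point of $\E\cap[r-\delta,r+\delta]$ is regular. However, the set $\E\cap[r-\delta,r+\delta]$ consists of irregular points and is therefore polar. I would then remove the polar closed set $\E\cap[r-\delta/2,r+\delta/2]$ and compare with $\E'=\E\bs(r-\delta/2,r+\delta/2)$, after enlarging slightly so that $\E'$ is compact. Removing a closed polar set does not change the Green function, so $G_\E(z,x_*)=G_{\E'}(z,x_*)$ for $z\in\Omega_\E$. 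The boundary value at a point $x\in\E\cap[r-\delta/2,r+\delta/2]$ is the $\limsup$ of $G_{\E'}$, which is harmonic near $x$ because the Green function extends harmonically across the polar set. Hence $G_\E(x,x_*)=G_{\E'}(x,x_*)$, and this is bounded below by $\min_{[r-\delta/2,r+\delta/2]}G_{\E'}(\cdot,x_*)=:\eta>0$, by continuity and positivity of $G_{\E'}$ off $\E'$. Here $x_*$ is assumed not to lie in this small interval, which holds after shrinking $\delta$ since $x_*\notin\E$ and $r\in\E$.

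Given this lower bound, any finite set $F$ of $N$ of the points $y_k$ that lie in $[r-\delta/2,r+\delta/2]$ (there are infinitely many, since $r$ is an accumulation point) satisfies $F\subset\E\bs\ol{\E^\reg}$ and $\sum_{x\in F}G_\E(x,x_*)\ge N\eta$. Letting $N\to\infty$ gives $\IR_0(\E,x_*)=\infty$.

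For the final claim, suppose $\E^\ir\bs\ol{\E^\reg}$ is uncountable. An uncountable subset of $\bbR$ has a point which is an accumulation point of the set and belongs to it, for instance a condensation point. Such a point lies in $\E\bs\ol{\E^\reg}$ and is an accumulation point of $\E^\ir$, so the first part applies.
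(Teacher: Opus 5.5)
Your proposal is correct and follows essentially the paper's route. Both arguments reduce to $\IR_0(\E,x_*)=\infty$ via Theorem~\ref{ALB0-thm}, using that after discarding a polar piece of $\E$ the Green function is continuous and positive near $r$: the paper discards all of $\E\bs\ol{\E^\reg}$ and uses $G_\E(x_j,x_*)\to G_\E(r,x_*)>0$, and your condensation-point argument for the uncountable case replaces the paper's $\dist(x,\ol{\E^\reg})\ge 1/k$ decomposition. One small fix is needed. With $\E'=\E\bs(r-\delta/2,r+\delta/2)$ the endpoints $r\pm\delta/2$ may lie in $\E'$, so $G_{\E'}(\cdot,x_*)$ need not be continuous and positive on all of $[r-\delta/2,r+\delta/2]$. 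Take $\E'=\E\bs(r-\delta,r+\delta)$ instead; it is already compact, so no enlarging is needed.
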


\begin{proof}
Let $r\in\E\bs\ol{\E^\reg}$ be an accumulation point of $\E^\ir$ and choose distinct points $x_j\in\E^\ir$ with $x_j\to r$. Since $r\notin\ol{\E^\reg}$, all but finitely many $x_j$ lie in $\E\bs\ol{\E^\reg}$. Also $\E\bs\ol{\E^\reg}$ is polar, so $G_\E=G_{\ol{\E^\reg}}$ and this Green function is harmonic in a neighborhood of $r$. Hence $G_\E(x_j,x_*)\to G_\E(r,x_*)>0$ as $j\to\infty$. Thus $\IR_0(\E,x_*)=\infty$, and Theorem~\ref{ALB0-thm} gives $\liminf_{n\to\infty}\W_n(\E,x_*)=\infty$.

If $\E^\ir\bs\ol{\E^\reg}$ is uncountable, then it follows from
\[
    \E^\ir\bs\ol{\E^\reg}\subset\bigcup_{k\ge1}\{x\in\E:\dist(x,\ol{\E^\reg})\ge1/k\}
\]
that one of the compact sets on the right contains uncountably many points of $\E^\ir$. This uncountable subset has an accumulation point in the same compact set, and this accumulation point lies in $\E\bs\ol{\E^\reg}$. Now the first part of the proof applies.
\end{proof}

We now return to the compact semi-regular setting. In this case
$\ol{\E^\reg}=\E^\reg$, so the restricted coefficient $\IR_0$ is the full
irregularity coefficient $\IR$. Thus Theorem~\ref{ALB0-thm} and
Corollary~\ref{accum-cor} give the following.

\begin{corollary}\label{ALB-cor}
Let $\E\subset\bbR$ be a compact semi-regular set and let
$x_*\in\ol\bbR\bs\E$. Then
\begin{align}\label{ALB}
    \liminf_{n\to\infty}\W_n(\E,x_*)\ge 2\exp\big[\IR(\E,x_*)\big].
\end{align}
Consequently, if $\IR(\E,x_*)=\infty$, then $\W_n(\E,x_*)\to\infty$.
In particular, $\W_n(\E,x_*)\to\infty$ if $\E^\ir$ has an accumulation point in
$\E^\ir$ or if $\E^\ir$ is uncountable.
\end{corollary}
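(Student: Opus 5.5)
The plan is to reduce everything to Theorem~\ref{ALB0-thm} and Corollary~\ref{accum-cor}, using that semi-regularity identifies the restricted and full irregularity coefficients.

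First, since $\E$ is semi-regular, $\E^\reg$ is closed, so $\ol{\E^\reg}=\E^\reg$. Hence $\E\bs\ol{\E^\reg}=\E^\ir$. The supremum in \eqref{IR0-def} therefore runs over all finite subsets $F\subset\E^\ir$. By the Remark following \eqref{IR-def}, this supremum equals $\IR(\E,x_*)$; this holds even when $\E^\ir$ is uncountable, in which case both sides are $\infty$. Thus $\IR_0(\E,x_*)=\IR(\E,x_*)$, and \eqref{ALB} follows directly from \eqref{ALB0}.

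Next, if $\IR(\E,x_*)=\infty$, then with the convention $\exp(\infty)=\infty$ the bound \eqref{ALB} reads $\liminf_{n}\W_n(\E,x_*)=\infty$, so $\W_n(\E,x_*)\to\infty$.

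For the final assertion, apply Corollary~\ref{accum-cor}. If $\E^\ir$ has an accumulation point $r\in\E^\ir$, then $r\in\E\bs\E^\reg=\E\bs\ol{\E^\reg}$, so the hypothesis of Corollary~\ref{accum-cor} holds. If $\E^\ir$ is uncountable, then $\E^\ir\bs\ol{\E^\reg}=\E^\ir$ is uncountable, and the second part of Corollary~\ref{accum-cor} applies. Alternatively, the uncountable case is already covered by $\IR=\infty$.

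No step is a real obstacle. The only point needing care is the uncountable case of the identification $\IR_0=\IR$, and that is handled by the Remark's sup-over-finite-sets characterization.
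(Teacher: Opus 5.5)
Your proposal is correct and matches the paper's argument. Semi-regularity gives $\ol{\E^\reg}=\E^\reg$, so $\IR_0(\E,x_*)=\IR(\E,x_*)$ by the Remark's sup-over-finite-sets characterization, including the uncountable case, and all three assertions then follow directly from Theorem~\ref{ALB0-thm} and Corollary~\ref{accum-cor}.
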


\section{Widom factors of semi-regular Parreau--Widom sets}\lb{Sec5}

We now combine the upper and lower bounds to characterize bounded Widom factors in the semi-regular Parreau--Widom class.

\begin{theorem}\label{BddEqv-thm}
Let $\E\subset\bbR$ be a semi-regular Parreau--Widom set and $x_*\in\ol\bbR\bs\E$.  Then the Widom factors $\W_n(\E,x_*)$ are bounded if and only if $\IR(\E,x_*)<\infty$.
\end{theorem}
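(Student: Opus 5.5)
The plan is to read the equivalence off directly from the two bounds already established: the uniform upper bound of Theorem~\ref{UB2-thm} and the asymptotic lower bound of Corollary~\ref{ALB-cor}.

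For the direction ``$\IR(\E,x_*)<\infty$ implies boundedness'', note that $\E$ is a semi-regular Parreau--Widom set. By definition \eqref{PW-def} and Corollary~\ref{IR-PW-inv}, this means $\PW(\E,x_*)=\PW(\E^\reg,x_*)<\infty$ for the given $x_*$. Theorem~\ref{UB2-thm} then gives, for every $n\ge1$,
\[
    \W_n(\E,x_*)\le 2\exp\bigl[\PW(\E,x_*)+\IR(\E,x_*)\bigr]<\infty .
\]
This is a bound independent of $n$, so the Widom factors are bounded.

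For the converse, we argue by contraposition. Suppose $\IR(\E,x_*)=\infty$; this includes the case where $\E^\ir$ is uncountable, by the convention in \eqref{IR-def}. Corollary~\ref{ALB-cor} applies to any compact semi-regular set and yields
\[
    \liminf_{n\to\infty}\W_n(\E,x_*)\ge 2\exp[\IR(\E,x_*)]=\infty ,
\]
so $\W_n(\E,x_*)\to\infty$ and the sequence is unbounded. Equivalently, boundedness forces $\IR(\E,x_*)<\infty$.

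Neither step presents a real obstacle, since all the analytic work is contained in the earlier results. The only point needing care is the bookkeeping of conventions. The Parreau--Widom hypothesis is stated as finiteness at some normalization point, and Corollary~\ref{IR-PW-inv} transfers it to the given $x_*$. The convention $\exp(\infty)=\infty$ is what makes the lower bound meaningful when $\IR$ diverges. One might also remark, via Theorem~\ref{Wn-ind-thm} and Corollary~\ref{IR-PW-inv}, that both sides of the equivalence are independent of the choice of $x_*$, which is consistent with the statement.
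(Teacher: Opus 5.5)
Your proposal is correct and follows essentially the same route as the paper's proof. The upper bound \eqref{UB2}, combined with $\PW(\E,x_*)<\infty$, gives boundedness when $\IR(\E,x_*)<\infty$, and Corollary~\ref{ALB-cor} gives $\W_n(\E,x_*)\to\infty$ when $\IR(\E,x_*)=\infty$.
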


\begin{proof}
If $\IR(\E,x_*)=\infty$, Corollary~\ref{ALB-cor} gives $\liminf_{n\to\infty}\W_n(\E,x_*)=\infty$, so the Widom factors are unbounded.  Conversely, suppose $\IR(\E,x_*)<\infty$. By the Parreau--Widom assumption, $\PW(\E,x_*)<\infty$ and hence $\W_n(\E,x_*)$ are bounded by \eqref{UB2}.
\end{proof}

The next result translates the finiteness of $\IR$ into a geometric condition when the regular part has finitely many gaps.

\begin{theorem}\label{BlaCond-thm}
Assume that $\E\subset\bbR$ is semi-regular, $x_*\in\ol\bbR\bs\E$, and that $\E^\reg$ is a finite union of closed intervals. Then
\begin{align}\label{Bla-cond}
    \IR(\E,x_*)<\infty
    \quad\Longleftrightarrow\quad
    \sum_{x\in\E^\ir}\dist(x,\E^\reg)^{1/2}<\infty,
\end{align}
where the sum on the right is understood to be infinite if $\E^\ir$ is uncountable.
\end{theorem}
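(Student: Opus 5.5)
The plan is to compare $G_\E(x,x_*)$ with $\dist(x,\E^\reg)^{1/2}$ uniformly for $x\in\E^\ir$ that are close to $\E^\reg$. Since $\E$ is semi-regular, $G_\E(\cdot,x_*)=G_{\E^\reg}(\cdot,x_*)$ is continuous on $\ol\bbC\bs\{x_*\}$. By Corollary~\ref{IR-PW-inv}, finiteness of $\IR$ does not depend on $x_*$, so we may take $x_*=\infty$; alternatively we can keep $x_*$ and use Lemma~\ref{G-compar-lem}.

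First dispose of the uncountable case. If $\E^\ir$ is uncountable, both sides are infinite by the stated conventions (recall $\IR=\infty$ by definition). So assume $\E^\ir$ is countable. Write $\E^\reg=\bigcup_{j=1}^N[a_j,b_j]$ with $a_j<b_j$; these intervals are nondegenerate since the set is regular.

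Fix $\delta>0$ smaller than half of every gap length of $\E^\reg$ and smaller than the distance from $\E^\reg$ to $x_*$. Only the points $x\in\E^\ir$ with $\dist(x,\E^\reg)<\delta$ matter, as we argue next. The points with $\dist(x,\E^\reg)\ge\delta$ lie in a compact set on which $G_\E$ is continuous and positive, and $\dist(\cdot,\E^\reg)$ is bounded above and below there. Hence on this part the two series converge or diverge together; either both terms are summed over finitely many points, or, if there are infinitely many such points, both series diverge, since the terms are bounded below.

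The key step is a two-sided estimate
\[
C^{-1}\dist(x,\E^\reg)^{1/2}\le G_{\E^\reg}(x,x_*)\le C\dist(x,\E^\reg)^{1/2}
\]
for real $x\notin\E^\reg$ within distance $\delta$ of an endpoint, say near $b_j$. For the upper bound, use monotonicity $G_{\E^\reg}\le G_{[a_j,b_j]}$ with the same pole; this needs a pole outside $[a_j,b_j]$, and comparability of poles follows from Lemma~\ref{G-compar-lem}. The interval Green function is explicit, $G_{[a,b]}(x)=\log|\xi+\sqrt{\xi^2-1}|$ with $\xi$ the affine image of $x$, and this behaves like $c\sqrt{x-b}$. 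For the lower bound, note that $\E^\reg$ lies in the complement of the gap $(b_j,a_{j+1})$, or of $(b_j,\infty)$ for the last interval. Hence $G_{\E^\reg}\ge G_{\ol\bbR\bs(b_j,a_{j+1})}$, where the right side is the Green function of the complement of a closed arc (an unbounded interval in $\ol{\bbR}$) with pole moved into the gap. That function is again explicit after a M\"obius map, and it also vanishes like a square root at $b_j$. The one point to check is that the pole positions are handled. I would do the comparison with $x_*=\infty$, or with an auxiliary pole in the gap far from $b_j$, and then transfer the bound using Lemma~\ref{G-compar-lem}; that lemma gives comparability on a whole neighborhood of $\E^\reg$, with a constant that does not depend on the point.

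With this estimate, summing over the countably many points of $\E^\ir$ near $\E^\reg$ gives the equivalence. The main obstacle is only the bookkeeping of poles in the lower bound; the rest is the explicit square-root behavior of interval Green functions at their endpoints.
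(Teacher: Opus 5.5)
Your proposal is correct and follows the same route as the paper: reduce to countable $\E^\ir$, use Corollary~\ref{IR-PW-inv} to pass to $x_*=\infty$, and apply the two-sided comparison $G_{\E^\reg}(x)\asymp\dist(x,\E^\reg)^{1/2}$ for real $x$ near $\E^\reg$. The only difference is that the paper cites this comparison as Proposition~4.8 of \cite{CSZ10}, while you prove it directly. You bound $G_{\E^\reg}$ above by the Green function of $[a_j,b_j]$ and below by the Green function of the complement of a closed arc of $\ol\bbR$. You move poles with Lemma~\ref{G-compar-lem}, shrinking $\delta$ if needed to stay inside its neighborhood, and you treat the points at distance $\ge\delta$ by compactness. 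This gives a valid self-contained argument.
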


\begin{proof}
It suffices to consider the case where $\E^\ir$ is countable. Since $\E^\reg$ is a finite union of intervals, Proposition~4.8 of \cite{CSZ10} implies that, for any sequence $\{x_j\}\subset\bbR\bs\E^\reg$,
\[
    \sum_j G_{\E^\reg}(x_j)<\infty
    \quad\Longleftrightarrow\quad
    \sum_j \dist(x_j,\E^\reg)^{1/2}<\infty.
\]
Since $\E$ is semi-regular, $G_\E(\cdot,\infty)=G_{\E^\reg}(\cdot,\infty)$ on $\bbR\bs\E^\reg$. Applying this to the irregular points gives
\[
    \IR(\E,\infty)<\infty
    \quad\Longleftrightarrow\quad
    \sum_{x\in\E^\ir}\dist(x,\E^\reg)^{1/2}<\infty.
\]
Finally, by Corollary~\ref{IR-PW-inv}, finiteness of $\IR(\E,x_*)$ is independent of the pole $x_*\in\ol\bbR\bs\E$. This proves \eqref{Bla-cond}.
\end{proof}

\begin{corollary}\lb{BddEqv-cor}
Suppose $\E\subset\bbR$ is semi-regular and $\E^\reg$ is a finite union of closed intervals.  Then, for every $x_*\in\ol\bbR\bs\E$, the Widom factors $\W_n(\E,x_*)$ are bounded if and only if
\begin{align*}
    \sum_{x\in\E^\ir}\dist(x,\E^\reg)^{1/2}<\infty.
\end{align*}
\end{corollary}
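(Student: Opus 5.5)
This follows by combining Theorem~\ref{BddEqv-thm} with Theorem~\ref{BlaCond-thm}. The plan is first to check that the hypotheses of Theorem~\ref{BddEqv-thm} hold, namely that $\E$ is a semi-regular Parreau--Widom set, and then to replace the condition $\IR(\E,x_*)<\infty$ by the geometric summability condition.

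The one point needing comment is that $\E^\reg$ is Parreau--Widom. Write $\E^\reg=\bigcup_{j=1}^N[a_j,b_j]$ with nondegenerate disjoint intervals; it is nonempty because $\E$ is non-polar. This set is regular, being a finite union of nondegenerate intervals. Its gaps in $\ol\bbR$ are finitely many, at most $N$ including the exterior gap. By the description of critical points in Section~\ref{Sec2.2}, $\cc C(\E^\reg,x_*)$ therefore has at most $N-1$ points. Each critical value $G_{\E^\reg}(c,x_*)$ is finite because $c\ne x_*$. Hence $\PW(\E,x_*)=\PW(\E^\reg,x_*)<\infty$, and by Corollary~\ref{IR-PW-inv} this finiteness holds for every normalization point.

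Theorem~\ref{BddEqv-thm} now says that $\sup_n\W_n(\E,x_*)<\infty$ if and only if $\IR(\E,x_*)<\infty$. Theorem~\ref{BlaCond-thm}, whose hypotheses are exactly ours, says that $\IR(\E,x_*)<\infty$ if and only if $\sum_{x\in\E^\ir}\dist(x,\E^\reg)^{1/2}<\infty$, with the sum read as infinite when $\E^\ir$ is uncountable. Chaining the two equivalences gives the corollary.

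There is no real obstacle here. The only care needed is to confirm that the finite-union hypothesis yields the Parreau--Widom property, which amounts to finitely many gaps contributing finite critical values, and that the uncountable case is covered by the conventions already adopted in Theorem~\ref{BlaCond-thm} and Corollary~\ref{ALB-cor}.
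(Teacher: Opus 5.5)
Your proposal is correct and follows the paper's proof: the paper also notes that $\E^\reg$ has only finitely many gaps, so $\PW(\E,x_*)$ is a finite sum and $\E$ is a semi-regular Parreau--Widom set. It then chains Theorem~\ref{BddEqv-thm} with Theorem~\ref{BlaCond-thm}, exactly as you do.
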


\begin{proof}
In the present setting $\E^\reg$ has finitely many gaps, so $\PW(\E,x_*)$ is a finite sum. Thus, the result follows from Theorems~\ref{BddEqv-thm} and~\ref{BlaCond-thm}.
\end{proof}

When the regular part is a single interval, the upper and lower bounds coincide.

\begin{theorem}\label{Wlim-int-thm}
Suppose that $\E=[a,b]\cup\{x_k\}_{k\ge1}$ is compact, $x_k\in\bbR\bs[a,b]$ are distinct, and  $x_*\in\ol\bbR\bs\E$.  Then the Widom factors are bounded if and only if
\begin{align}\label{int-sqrt-cond}
    \sum_{k=1}^{\infty}\dist(x_k,[a,b])^{1/2}<\infty.
\end{align}
Moreover, the Widom factors admit the asymptotic formula
\begin{align}\label{int-lim}
    \lim_{n\to\infty}\W_n(\E,x_*)=2\exp\big[\IR(\E,x_*)\big].
\end{align}
\end{theorem}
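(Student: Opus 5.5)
First I would check that $\E$ is semi-regular with $\E^\reg=[a,b]$. Since $\E$ is compact and the $x_k$ are distinct, the $x_k$ can accumulate only on $[a,b]$, so every $x_k$ is an isolated point of $\E$. Isolated points are polar, hence irregular, while every point of $[a,b]$ is regular. Thus $\E^\reg=[a,b]$ is closed and $\E^\ir=\{x_k\}_{k\ge1}$. With this in hand, the boundedness criterion \eqref{int-sqrt-cond} is immediate from Corollary~\ref{BddEqv-cor}, because $\dist(x_k,\E^\reg)=\dist(x_k,[a,b])$.

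For the limit \eqref{int-lim}, Corollary~\ref{ALB-cor} already gives
\[
\liminf_{n\to\infty}\W_n(\E,x_*)\ge2\exp[\IR(\E,x_*)].
\]
If $\IR(\E,x_*)=\infty$, the limit is $\infty$ and nothing more is needed. So I would assume $\IR(\E,x_*)<\infty$ and prove the matching $\limsup$ bound.

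Theorem~\ref{UB2-thm} gives $\W_n\le2\exp[\PW(\E,x_*)+\IR(\E,x_*)]$, and here $\E^\reg=[a,b]$. If $x_*=\infty$ or $x_*\notin[a,b]$'s convex hull, the only gap of $[a,b]$ is the one containing $x_*$, so $\PW(\E,x_*)=0$ and the bound is already sharp. This covers every case, since $x_*\in\ol\bbR\bs\E$ cannot lie in $(a,b)$. It is worth confirming the claim $\PW=0$ directly against the definition. The unique gap $L=\ol\bbR\bs[a,b]$ of $\E^\reg$ contains $x_*$, so $\cc C([a,b],x_*)$ is empty and $\PW(\E,x_*)=0$ by the stated convention. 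Hence Theorem~\ref{UB2-thm} yields
\[
\sup_n\W_n(\E,x_*)\le 2\exp[\IR(\E,x_*)],
\]
which combined with the $\liminf$ bound proves \eqref{int-lim}.

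The only potential obstacle is the verification, inside Theorem~\ref{UB2-thm}, that in the single gap $L\ni x_*$ each irregular point is selected as the maximizing endpoint of at most one gap of $\E$. I would double-check this against monotonicity of $G_{[a,b]}(\cdot,x_*)$ on the two components of $L\bs\{x_*\}$. On each component the Green function increases toward $x_*$, so a gap $K$ of $\E$ not containing $x_*$ has its supremum at the endpoint nearer $x_*$. That endpoint is some $x_k$, and distinct gaps on the same side pick distinct $x_k$. For $x_*=\infty$ the two sides are $(b,\infty)$ and $(-\infty,a)$, and the same reasoning applies. Given this, the theorem follows with no new estimates.
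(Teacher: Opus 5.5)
Your proposal is the paper's proof. You identify $\E^\reg=[a,b]$ and $\E^\ir=\{x_k\}$, and you note $\PW(\E,x_*)=0$ because the only gap of $[a,b]$ contains $x_*$. You then sandwich $\W_n(\E,x_*)$ between Corollary~\ref{ALB-cor} and Theorem~\ref{UB2-thm}, and you read off the boundedness criterion from Corollary~\ref{BddEqv-cor}.

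One justification is wrong, though it is easy to repair. Compactness does not force the $x_k$ to accumulate only on $[a,b]$. Some $x_j$ can itself be a limit of other $x_k$, as in $[-1,1]\cup\{2\}\cup\{2+1/k\}_{k\ge1}$. In that case $x_j$ is not isolated, so your route to $\E^\ir=\{x_k\}$ fails. Separately, ``polar, hence irregular'' is not a valid inference: every singleton in $[a,b]$ is polar but regular.

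Argue as the paper does instead. The countable set $\{x_k\}$ is polar, so $G_\E(\cdot,x_*)=G_{[a,b]}(\cdot,x_*)$. This function vanishes on $[a,b]$ and is positive at every $x_k$, which gives $\E^\reg=[a,b]$ and $\E^\ir=\{x_k\}$ with no isolation claim. Isolation of the $x_k$ does follow once \eqref{int-sqrt-cond} holds, but the theorem also covers the case where it fails.
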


\begin{proof}
Since the countable set $\{x_k\}$ is polar, $\E^\reg=[a,b]$ and $G_\E=G_{[a,b]}$.  The Parreau--Widom contribution vanishes, $\PW(\E,x_*)=0$.  The upper bound \eqref{UB2} and the lower bound \eqref{ALB} give
\begin{align*}
    2\exp\big[\IR(\E,x_*)\big]
    \le
    \liminf_{n\to\infty}\W_n(\E,x_*)
    \le
    \limsup_{n\to\infty}\W_n(\E,x_*)
    \le
    2\exp\big[\IR(\E,x_*)\big],
\end{align*}
which proves \eqref{int-lim}.  The boundedness criterion follows from Corollary~\ref{BddEqv-cor}.
\end{proof}

\begin{example}
Let $\alpha>0$ and $\E_\alpha=[-1,1]\cup\{1+k^{-\alpha}\}_{k\ge1}$.  Then, for every $x_*\in\ol\bbR\bs\E_\alpha$, the Widom factors $\W_n(\E_\alpha,x_*)$ are bounded if and only if $\alpha>2$.
\end{example}

We finish with Szeg\H{o}--Widom asymptotics for the normalized extremal polynomials in the interval-with-points case.  For the remainder of the section we assume that $\E=[a,b]\cup\{x_k\}_{k\ge1}$, the points $x_k\in\bbR\bs[a,b]$ are distinct, and \eqref{int-sqrt-cond} holds.  Set $\Omega=\ol\bbC\bs[a,b]$ and define
\begin{align*}
    B(z)=
    \frac{z-\frac{a+b}{2}-\sqrt{(z-a)(z-b)}}{\frac{b-a}{2}},
\end{align*}
where the branch is chosen so that $\sqrt{(z-a)(z-b)}=z-(a+b)/2+O(z^{-1})$ as $z\to\infty$.  Then $B$ maps $\Omega$ conformally onto $\bbD$, $B(\infty)=0$, $|B(z)|=\exp[-G_{[a,b]}(z)]$, and
\begin{align}\label{B-infty}
    B(z)=\frac{\ca([a,b])}{z}+O(z^{-2}),\quad z\to\infty.
\end{align}

For $y\in\bbR\bs[a,b]$, set $\beta_y=B(y)$ and define
\begin{align*}
    b_y(z)=\frac{|\beta_y|}{\beta_y}\frac{\beta_y-B(z)}{1-\ol{\beta_y}B(z)}.
\end{align*}
Then $b_y$ has a simple zero at $y$ and unimodular boundary values on $[a,b]$. It follows that $|b_y(z)|=\exp[-G_{[a,b]}(z,y)]$.  The Blaschke condition \eqref{int-sqrt-cond} is equivalent to locally uniform convergence in $\Omega$ of the product
\begin{align*}
    A(z)=\prod_{k=1}^{\infty}b_{x_k}(z).
\end{align*}
Consequently, for $z\in\Om\bs\{x_k\}_{k\ge1}$,
\begin{align}\label{A-G}
    |A(z)|
    =
    \exp\Bigl[-\sum_{k=1}^{\infty}G_{[a,b]}(z,x_k)\Bigr],
\end{align}
with both sides extending by zero at the points $x_k$. In particular, at $z=x_*$ we have
\begin{align}\label{A-IR}
    |A(x_*)|=\exp[-\IR(\E,x_*)].
\end{align}

For $x_*\in\ol{\bbR}\bs\E$, define
\[
    B_{x_*}(z)=
    \begin{cases}
    B(z), & x_*=\infty,\\[1mm]
    \dfrac{|B(x_*)|}{B(x_*)}B(z), & x_*\in\bbR\bs\E,
    \end{cases}
    \quad
    A_{x_*}(z)=
    \begin{cases}
    A(z), & x_*=\infty,\\[1mm]
    \dfrac{|A(x_*)|}{A(x_*)}A(z), & x_*\in\bbR\bs\E.
    \end{cases}
\]
Then for $x_*\in\bbR\bs\E$,
\[
    B_{x_*}(x_*)=|B(x_*)|>0
    \quad\text{and}\quad
    A_{x_*}(x_*)=|A(x_*)|>0,
\]
and at infinity $\lim_{z\to\infty}zB(z)=\ca([a,b])>0$ and $A_\infty(\infty)=A(\infty)>0$.

\begin{lemma}\label{SW-bound-lem}
Assume that $\E=[a,b]\cup\{x_k\}_{k\ge1}$, the points
$x_k\in\bbR\bs[a,b]$ are distinct and satisfy \eqref{int-sqrt-cond}. Fix
$x_*\in\ol\bbR\bs\E$ and set
\[
    F_n(z)=\frac{T_{n,x_*}(z)}{\|T_{n,x_*}\|_\E}B_{x_*}(z)^n.
\]
If $F_{n_j}$ converges locally uniformly in $\Omega$ to $F$, then
$F(x_k)=0$ for every $k$ and
\[
    |F(z)|\le \frac12 |A_{x_*}(z)|,\quad z\in\Omega.
\]
\end{lemma}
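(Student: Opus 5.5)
The plan is to handle the two claims separately: first that each $x_k$ is a zero of $F$, then a pointwise bound obtained by applying the maximum principle to $F_n$ divided by a finite Blaschke product over the first $N$ points.

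\emph{Zeros at $x_k$.} We will show that $F_n(x_k)$ is small because $|T_{n,x_*}(x_k)|\le\|T_{n,x_*}\|_\E$, as $x_k\in\E$. This gives $|F_n(x_k)|\le|B_{x_*}(x_k)|^n=e^{-nG_{[a,b]}(x_k)}\to0$. Locally uniform convergence then yields $F(x_k)=\lim_j F_{n_j}(x_k)=0$.

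\emph{Pointwise bound.} Fix $N$ and set $A_N(z)=\prod_{k=1}^N b_{x_k}(z)$. We will apply the maximum principle to $\log|F_n/A_N|$ on $\Omega$ minus small disks around $x_1,\dots,x_N$ and the pole point.

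First we need a boundary bound from \eqref{Tn-pointwise} and Bernstein--Walsh. Bernstein--Walsh on $\E$ (with $G_\E=G_{[a,b]}$) gives $|T_{n,x_*}(z)|/\|T_{n,x_*}\|_\E\le e^{nG_{[a,b]}(z)}$. Hence $|F_n(z)|\le1$ on $\Omega$, with the appropriate removable singularity at $x_*$ or at $\infty$.

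This bound alone is too weak, since it gives $1$ rather than $\tfrac12|A_{x_*}|$. The better route is to pass to the limit first. We will consider the subharmonic function $u=\log|F|-\log|A_N|$ on $\Omega$, which is bounded near each $x_k$ because $F(x_k)=0$ and $b_{x_k}$ has a simple zero there. Its boundary values on $[a,b]$ are at most $0$, since $|A_N|=1$ there and $|F|\le 1$ (the latter by taking $\limsup$ from $|F_n|\le1$ as $z$ approaches $[a,b]$). So $|F|\le|A_N|$ throughout $\Omega$, and letting $N\to\infty$ gives $|F|\le|A|=|A_{x_*}|$.

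This still lacks the factor $\tfrac12$, and supplying it is the main obstacle. The plan is to evaluate at the normalization point and use the Widom factor limit. By \eqref{WnDe-res-asymp}, Theorem~\ref{Wlim-int-thm} and \eqref{A-IR}, we get $F_n(x_*)\to \tfrac12 e^{-\IR}=\tfrac12|A_{x_*}(x_*)|$. In the Chebyshev case the analogous statement comes from the leading coefficient via \eqref{B-infty}. So $F/A_{x_*}$ is holomorphic on $\Omega$, has modulus at most $1$, and equals the positive value $\tfrac12$ at $x_*$.

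That alone does not force $|F/A_{x_*}|\le\tfrac12$, so a sharper boundary estimate is needed. We will use \eqref{Tn-pointwise} with the $\tfrac12$ factor: there $|T|/\|T\|\le\tfrac12(e^{d_nG_{\E_n}}+e^{-d_nG_{\E_n}})\le \tfrac12e^{nG_{\E_n}}+\tfrac12$. On a compact $K\subset\Omega$ away from $[a,b]$, we have $G_{\E_n}\le G_\E$, so the term $\tfrac12 e^{nG_{\E_n}}|B|^n\le\tfrac12$ while the term $\tfrac12|B|^n\to0$. Thus $\limsup|F_n|\le\tfrac12$ on compacts.

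We then rerun the maximum-principle argument with boundary values: on the curve $\{G_{[a,b]}=\delta\}$ we have $|F|\le\tfrac12$ and $|A_N|\to1$ as $\delta\to0$, uniformly. We must choose the level curves to avoid the $x_k$; since the $x_k$ accumulate only at $[a,b]$, after fixing $N$ the finitely many other points can be absorbed by noting $|F|/|A_N|$ is subharmonic and controlled. This gives $|F|\le\tfrac12|A_N|$, and letting $N\to\infty$ completes the proof.
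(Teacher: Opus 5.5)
Your proposal is correct, but it gets the factor $\tfrac12|A_{x_*}|$ by a different mechanism than the paper.

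\textbf{The paper's route.} The paper does not use the zeros of $F$ for the inequality. It writes $|F_n(z)|\le\tfrac12e^{-d_n\De_{\E,n}(z)}+\tfrac12e^{-nG_\E(z)}$. It then proves $\liminf_n d_n\De_{\E,n}(z)\ge\sum_kG_\E(x_k,z)$ in three steps: it moves the pole to $z$ in \eqref{De-rep-z}, it localizes the bands of $\E_n$ around $x_1,\dots,x_m$ via Lemma~\ref{shrink-lem}, and it uses that each such band carries $\mu_n$-mass at least $1/d_n$. This is the mechanism of Theorem~\ref{ALB0-thm}.

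\textbf{Your route.} You discard the $\De_{\E,n}$ information. You keep only the crude consequence $\limsup_n|F_n|\le\tfrac12$ of \eqref{Tn-pointwise}, which follows from $d_n\le n$ and $G_{\E_n}\le G_\E$. You then recover the Blaschke factor function-theoretically from $F(x_k)=0$: divide by $A_N$, apply the maximum principle, and let $N\to\infty$.

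\textbf{Comparison.} Your argument is more elementary, since it needs neither Lemma~\ref{shrink-lem} nor \eqref{De-rep-z}. The paper's argument yields more. It bounds $\limsup_n|F_n(z)|$ itself, with no limit function needed. It also gives the quantitative estimate $\liminf_n d_n\De_{\E,n}(z)\ge\sum_kG_\E(x_k,z)$ for every $z$. Your argument only controls subsequential limits, which is exactly what the lemma asks for.

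\textbf{Tightening your write-up.} The preliminary bound $|F|\le|A_N|$ and the evaluation at $x_*$ are unnecessary. The level-curve worry is also moot. $F/A_N$ is holomorphic on all of $\Omega$, including at $\infty$ since $A_N(\infty)\neq0$. You have $|F|\le\tfrac12$ on $\Omega$, and $|A_N(z)|\to1$ uniformly as $z\to[a,b]$. So the maximum principle on $\Omega$ gives $|F|\le\tfrac12|A_N|$ directly, and the points $x_k$ with $k>N$ play no role. Finally, \eqref{Tn-pointwise} is stated only off $\E_n$, so you should record the case $z\in\E_n$ separately. There $|F_n(z)|\le|B_{x_*}(z)|^n\to0$, so the bound $\limsup_n|F_n(z)|\le\tfrac12$ holds at every $z\in\Omega$.
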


\begin{proof}
To prove the result we work along the convergent subsequence and write $n$ instead of $n_j$. Since $\{x_k\}_{k\ge1}$ is polar, $G_\E=G_{[a,b]}$. Fix $z\in\Omega\bs\{\infty\}$. If $z\in\E_n$ for infinitely many $n$, in particular if $z=x_k$, then along that subsequence $|F_n(z)|\le|B_{x_*}(z)|^n= e^{-nG_\E(z)}\to0$, and hence $F(z)=0$. Thus, $F(x_k)=0$ for every $k$ and to prove the inequality it suffices to assume $z\in\Omega_{\E_n}\bs\{\infty\}$ for large $n$.

By \eqref{Tn-pointwise}, $|B_{x_*}(z)|=e^{-G_\E(z)}$, \eqref{De-def}, and $d_n\le n$, we have
\begin{align}\label{F-est}
    |F_n(z)|
    &\le
    \frac12 \bigl(e^{d_nG_{\E_n}(z)} + e^{-d_nG_{\E_n}(z)}\bigr)e^{-n G_\E(z)} \no \\
    &\le
    \frac12 e^{-d_n\De_{\E,n}(z)}
    +
    \frac12 e^{-nG_\E(z)}.
\end{align}
Since $G_\E(z)>0$, the last term tends to zero. To estimate the first term we proceed as in the proof of Theorem~\ref{ALB0-thm}, but with the pole of the Green function at $z$. The summability condition \eqref{int-sqrt-cond} implies that every $x_k$ is isolated in $\E$. Then, for every fixed $m$, Lemma~\ref{shrink-lem} shows that the components of $\E_n$ containing $x_1,\ldots,x_m$ are pairwise disjoint, shrink to these points, and
lie in $\E_n\bs\E^\reg$ for all sufficiently large $n$. Since each component has $\mu_n$-measure at least $1/d_n$ and $G_\E(\cdot,z)$ is continuous at $x_1,\ldots,x_m$, the first equality in \eqref{De-rep-z} gives
\[
    \liminf_{n\to\infty} d_n\De_{\E,n}(z)
    \ge
    \sum_{k=1}^m G_\E(x_k,z).
\]
Then letting $m\to\infty$ gives
\begin{align}\label{dnDe-est}
    \liminf_{n\to\infty} d_n\De_{\E,n}(z)
    \ge
    \sum_{k=1}^\infty G_\E(x_k,z).
\end{align}
Therefore by \eqref{F-est}, \eqref{dnDe-est}, and \eqref{A-G},
\[
    |F(z)|
    \le
    \frac12
    \exp\Bigl[-\sum_{k=1}^\infty G_\E(x_k,z)\Bigr]
    =
    \frac12|A_{x_*}(z)|, \quad z\in\Om\bs\{\infty\}.
\]
The inequality at infinity then follows by taking the limit $z\to\infty$.
\end{proof}

\begin{theorem}[Szeg\H{o}--Widom asymptotics]\label{Slim-int-thm}
Assume that $\E=[a,b]\cup\{x_k\}_{k\ge1}$, the points $x_k\in\bbR\bs[a,b]$ are distinct and satisfy \eqref{int-sqrt-cond}.
Then, for every $x_*\in\ol{\bbR}\bs\E$,
\begin{align*}
    \frac{T_{n,x_*}(z)}{\|T_{n,x_*}\|_\E}B_{x_*}(z)^n
    \to
    \frac12 A_{x_*}(z)
\end{align*}
locally uniformly on compact subsets of $\Omega$.
\end{theorem}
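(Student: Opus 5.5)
Since $F_n$ and $\frac12 A_{x_*}$ are analytic on $\Omega$, the plan is a normal families argument: show that $\{F_n\}$ is normal on $\Omega$, that every subsequential limit is forced to equal $\frac12 A_{x_*}$, and conclude convergence of the full sequence. Lemma~\ref{SW-bound-lem} supplies the upper half of the identification. The lower half will come from the exact value of the Widom factors in Theorem~\ref{Wlim-int-thm}, together with a maximum-principle argument at the normalization point.

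\emph{Step 1 (normality).} By \eqref{Tn-pointwise} and $G_{\E_n}\le G_\E$, on $\Omega_{\E_n}$ we have $|F_n(z)|\le \frac12(1+e^{-2d_nG_{\E_n}(z)})e^{-(n-d_n)G_\E(z)}\le 1$. On $\E_n\setminus[a,b]$ we have $|F_n|\le |B_{x_*}|^n\le1$. The point $\infty$ is harmless, since $T_n B^n$ is bounded there and in the residual case $\E_n$ avoids $x_*$. Hence $\|F_n\|_\Omega\le1$ uniformly, so by Montel $\{F_n\}$ is a normal family in $\Omega$, and every subsequence has a further subsequence converging locally uniformly to some analytic $F$ with $|F|\le1$.

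\emph{Step 2 (value at $x_*$).} Take a convergent subsequence with limit $F$.
\begin{itemize}
\item[] In the residual case, $F_n(x_*)=B_{x_*}(x_*)^n/\|T_{n,x_*}\|_\E = |B(x_*)|^n/\|T_{n,x_*}\|_\E$. Since $|B(x_*)|=e^{-G_\E(x_*)}=\ca(\E,x_*)$, this equals $\W_n(\E,x_*)^{-1}\bigl(1+\ca(\E,x_*)^{2n}\bigr)$, which by \eqref{int-lim} and \eqref{A-IR} tends to $\frac12\exp[-\IR(\E,x_*)]=\frac12 A_{x_*}(x_*)$.
\item[] In the Chebyshev case, $F_n(\infty)=\lim_{z\to\infty}z^nB(z)^n/\|T_n\|_\E=\ca(\E)^n/\|T_n\|_\E=1/\W_n(\E)$, which tends to $\frac12 e^{-\IR}=\frac12 A(\infty)$.
\end{itemize}
In both cases $F(x_*)=\frac12 A_{x_*}(x_*)$, and this value is positive and real.

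\emph{Step 3 (identification).} Lemma~\ref{SW-bound-lem} gives $F(x_k)=0$ and $|F|\le\frac12|A_{x_*}|$ on $\Omega$. The function $A_{x_*}$ has zeros only at the $x_k$, all simple, because locally uniform convergence of the Blaschke product means each factor contributes exactly one simple zero. So $h=F/A_{x_*}$ is analytic on $\Omega$ and satisfies $|h|\le\frac12$. Since $h(x_*)=\frac12$ with $x_*\in\Omega$ an interior point, the maximum modulus principle forces $h\equiv\frac12$, that is, $F=\frac12A_{x_*}$. As every subsequential limit is the same function, the full sequence converges locally uniformly on $\Omega$.

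The main obstacle I expect is checking the removability in Step 3, namely that $F/A_{x_*}$ has no poles. This needs that the zeros of $F$ at the $x_k$ have order at least that of $A_{x_*}$, which is one; that already follows from $F(x_k)=0$. The remaining care is the behavior near $\infty$ and the uniform bound in Step 1 near $[a,b]$. Neither should cause difficulty, since $\Omega$ is open and $[a,b]$ is excluded from it.
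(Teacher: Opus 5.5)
Your proposal is correct and follows essentially the same route as the paper. Both arguments establish normality, use Lemma~\ref{SW-bound-lem} to get $F(x_k)=0$ and $|F|\le\frac12|A_{x_*}|$, identify $F(x_*)=\frac12A_{x_*}(x_*)$ from the exact Widom limit \eqref{int-lim} together with \eqref{A-IR}, and then apply the maximum principle to $2F/A_{x_*}$. The only cosmetic difference is that you obtain normality from the explicit uniform bound $|F_n|\le1$ via \eqref{Tn-pointwise}, whereas the paper uses local boundedness from the Bernstein--Walsh inequality.
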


\begin{proof}
Define
\[
    F_n(z)=\frac{T_{n,x_*}(z)}{\|T_{n,x_*}\|_\E}B_{x_*}(z)^n.
\]
Since $G_\E=G_{[a,b]}$, $|B_{x_*}(z)|=e^{-G_\E(z)}$, and
$\deg T_{n,x_*}\le n$, Bernstein--Walsh inequality \eqref{BW-ineq} gives
local boundedness of $\{F_n\}$ in $\Omega$. Hence $\{F_n\}$ is a normal
family.

Let $F$ be a subsequential limit. By Lemma~\ref{SW-bound-lem},
\[
    |F(z)|\le \frac12|A_{x_*}(z)|,\qquad z\in\Omega,
\]
and $F(x_k)=0$ for every $k$. Hence
\[
    H(z)=\frac{2F(z)}{A_{x_*}(z)}
\]
has removable singularities at the points $x_k$, is analytic in $\Omega$,
and satisfies $|H|\le1$.

We identify the normalization value. If $x_*=\infty$, then by \eqref{B-infty},
the monicity of $T_n$, \eqref{int-lim}, and \eqref{A-IR},
\[
    F(\infty)=
    \lim_{n\to\infty}\frac{\ca([a,b])^n}{\|T_n\|_\E}
    =
    \frac12A_\infty(\infty).
\]
If $x_*\in\bbR\bs\E$, then $T_{n,x_*}(x_*)=1$ and
$B_{x_*}(x_*)=|B(x_*)|$. Using \eqref{Wn-res}, \eqref{int-lim}, and
\eqref{A-IR}, we obtain
\[
    F(x_*)=
    \lim_{n\to\infty}\frac{|B(x_*)|^n}{\|T_{n,x_*}\|_\E}
    =
    \lim_{n\to\infty}\frac{1+|B(x_*)|^{2n}}{\W_n(\E,x_*)}
    =
    \frac12A_{x_*}(x_*).
\]
Thus $H(x_*)=1$ in each case. By the
maximum principle, $H\equiv1$. Hence every subsequential limit is
$A_{x_*}/2$, and the full sequence converges locally uniformly on compact
subsets of $\Omega$.
\end{proof}




\begin{thebibliography}{99}
















\bibitem{Chr12} J.\ S.\ Christiansen, \emph{Szeg\H{o}'s theorem on Parreau-Widom sets}, Adv.\ Math.\ \textbf{229} (2012), 1180--1204.

\bibitem{Chr19} J.\ S.\ Christiansen, \emph{Dynamics in the Szeg\H{o} class and polynomial asymptotics}, J.\ Anal.\ Math.\ \textbf{137} (2019), no.~2, 723--749.

\bibitem{CSZ10} J.\ S.\ Christiansen, B.\ Simon, and M.\ Zinchenko, \emph{Finite gap Jacobi matrices, I. The isospectral torus}, Constr.\ Approx.\ \textbf{32} (2010), no.~1, 1--65.

\bibitem{CSZ11} J.\ S.\ Christiansen, B.\ Simon, and M.\ Zinchenko, \emph{Finite gap Jacobi matrices, II.\ The Szeg\H{o} class}, Constr.\ Approx.\ \textbf{33} (2011), 365--403.

\bibitem{CSZ1} J.\ S.\ Christiansen, B.\ Simon, and M.\ Zinchenko, \emph{Asymptotics of Chebyshev Polynomials, I. Subsets of $\bbR$}, Invent.\ Math.\ \textbf{208} (2017), 217--245.

\bibitem{CSYZ2} J.~S.~Christiansen, B.~Simon, P.~Yuditskii, and M.~Zinchenko, \emph{Asymptotics of Chebyshev Polynomials, II.\ DCT subsets of $\bbR$}, Duke Math. J. \textbf{168} (2019), 325--349.



\bibitem{CSZ5} J.\ S.\ Christiansen, B.\ Simon, and M.\ Zinchenko, \emph{Asymptotics of Chebyshev Polynomials, V. Residual polynomials}, Ramanujan J.\ \textbf{61} (2023), 251--278.

\bibitem{CSZ6} J.\ S.\ Christiansen, B.\ Simon, and M.\ Zinchenko, \emph{Bounds for weighted Chebyshev and residual polynomials on subsets of $\bb R$}, Constr. Approx. \textbf{64} (2026), 87--104.

\bibitem{ELY24} B.\ Eichinger, M.\ Luki\'c, and G.\ Young, \emph{Asymptotics of Chebyshev rational functions with respect to subsets of the real line}, Constr. Approx. \textbf{59} (2024), 541--581.

\bibitem{EY11} A.\ Eremenko and P.\ Yuditskii,
    \emph{Comb functions}, in \emph{Recent advances in orthogonal polynomials, special functions, and their applications}, Contemp. Math., \textbf{578}, American Mathematical Society, Providence, RI, 2012, pp.~99--118.






\bibitem{GonHat15} A.\ Goncharov and B.\ Hatino\u{g}lu, \emph{Widom factors}, Potential Anal.\ \textbf{42} (2015), 671--680.















\bibitem{PehYud01} F.\ Peherstorfer and P.\ Yuditskii, \emph{Asymptotics of orthonormal polynomials in the presence of a  denumerable set of mass points}, Proc.\ Amer.\ Math.\ Soc. \textbf{129} (2001), 3213--3220.

\bibitem{PehYud03} F.\ Peherstorfer and P.\ Yuditskii, \emph{Asymptotic behavior of polynomials orthonormal on a homogeneous set}, J.\ Anal.\ Math.\ {\bf 89} (2003), 113--154.

\bibitem{PehYud06} F.\ Peherstorfer and P.\ Yuditskii, \emph{Remark on the paper ``Asymptotic behavior of polynomials orthonormal on a homogeneous set''}, arXiv:math/0611856.



\bibitem{Ran95} T.\ Ransford, \emph{Potential Theory in the Complex Plane}, Cambridge University Press, 1995.



\bibitem{Sch08} K.\ Schiefermayr, \emph{A lower bound for the minimum deviation of the Chebyshev polynomial on a compact real set}, East J. Approx. \textbf{14} (2008), 223--233.

\bibitem{Sch11} K.~Schiefermayr,
    \emph{A lower bound for the norm of the minimal residual polynomial}, Constr. Approx. \textbf{33} (2011), no.~3, 425--432.

\bibitem{Sch17} K.~Schiefermayr,
    \emph{The growth of polynomials outside of a compact set---the Bernstein--Walsh inequality revisited}, J. Approx. Theory \textbf{223} (2017), 9--18.





\bibitem{Sim11} B. Simon, \emph{Szeg\H{o}'s Theorem and Its Descendants: Spectral Theory for  $L^2$ Perturbations of Orthogonal Polynomials}, Princeton University Press, Princeton, 2011.

\bibitem{SimZla03} B.\ Simon and A.\ Zlato\v{s}, \emph{Sum rules and the Szeg\H{o} condition for orthogonal polynomials on the real line}, Comm.\ Math.\ Phys. \textbf{242} (2003), 393--423.

















\end{thebibliography}
\end{document}